\documentclass[a4paper]{article}

\usepackage[utf8]{inputenc}
\usepackage{lmodern}
\usepackage{amssymb,amsfonts,amsthm,amsmath,bbm,mathrsfs,aliascnt,mathtools,dsfont} 
\usepackage[english]{babel}
\usepackage{tikz}
\usetikzlibrary{angles,quotes}

\usepackage[colorlinks]{hyperref}
\usepackage{tikz}
\usetikzlibrary{decorations}
\usetikzlibrary{decorations.pathreplacing}
\tikzset{individu/.style={draw,thick}}

\theoremstyle{plain}
\newtheorem{theorem}{Theorem}
\newtheorem{corollary}[theorem]{Corollary}
\newtheorem{lemma}[theorem]{Lemma}
\newtheorem{proposition}[theorem]{Proposition}

\theoremstyle{definition}

\theoremstyle{remark}
\newtheorem{remark}[theorem]{Remark}

\numberwithin{equation}{section}

\usepackage{tikz}

\makeatletter \newcommand \listoftodos{\section*{Todo list} \@starttoc{tdo}}
\newcommand\l@todo[2]
{\par\noindent \textit{#2}, \parbox{10cm}{#1}\par} \makeatother

\newcommand{\N}{\mathbb{N}}

\newcommand{\R}{\mathbb{R}}
\newcommand{\C}{\mathbb{C}}

\newcommand{\Z}{\mathcal{Z}}

\newcommand{\indset}[1]{\mathds{1}_{#1}}

\renewcommand{\bar}[1]{\overline{#1}}

\newcommand{\blambda}{\boldsymbol{\lambda}}
\newcommand{\e}{\mathrm{e}}
\newcommand{\dd}{\mathrm{d}}

\renewcommand{\Sigma}{\boldsymbol{\sigma}}

\DeclareMathOperator{\E}{\mathbb{E}}

\renewcommand{\P}{\mathbb{P}}

\newcommand{\m}{\mathbf{m}}
\renewcommand{\epsilon}{\varepsilon}

\title{Another  solvable population dynamic\\ with uniform ancestral memory}
\author{Jean Bertoin\thanks{Institute of Mathematics, University of Zurich, Switzerland.} }
\date{ }

\begin{document}

\maketitle

\begin{abstract} We study a population model with non-overlapping generations, where individuals have a type and retain the memory of the types of all their ancestors. We assume that for each individual, the reproduction law depends on the empirical distribution of ancestral types, unlike for a classical multi-type branching process where it hinges on the parental  type only. We further assume the mean reproduction matrix has rank one.
Building on techniques from our previous works \cite{BM1, BM3}  with Bastien Mallein, which themselves draw on foundational contributions \cite{FGP, FDP} by Philippe Flajolet and collaborators in the context of analytic urns, we analyze the population at large generations. In particular, we determine the Malthusian parameter and estimate the expected size of the sub-population whose empirical distribution of ancestral types lies within a given set.

\end{abstract}

\noindent \emph{\textbf{Keywords:}} Population dynamics, ancestral memory, Malthusian parameter, analysis of singularities, large deviations.

\medskip

\noindent \emph{\textbf{AMS subject classifications:}}  60J80; 92D25; 05A16; 60F10.

\section{Introduction} 
Our motivation is to investigate to what extent making the reproduction of individuals depend on ancestral lineages 
can impact growth rates in dynamics of populations. For this, we have designed an elementary model which is mathematically tractable. This enables us to  reveal some rather unexpected features that may counterbalance some otherwise overly simplistic aspects. We now first present the model without memory, which will serve as a benchmark,
and then introduce its modified version with ancestral memory. In a second step we will further specify key elements of the evolution  to make the model analytically solvable.

The benchmark population model without memory  is simply the multi-type branching process with a finite  space of types $S$, as considered for instance in \cite[Chapter V]{AN}. 
Its evolution is entirely encoded by the family  of reproduction laws $(p(s, \cdot))_{s\in S}$ on $\N^S$. 
Specifically, for each $s\in S$
and each  $\mathbf{n}=(n(s'))_{s'\in S}\in \N^S$, $p(s,\mathbf{n})$
is the probability that a parent of type $s$ has exactly $n(s')$ children of type $s'$ for every $s'\in S$. 
It is well-known that many properties of the multi-type branching process hinge on the  mean reproduction matrix $R\in \R_+^{S\times S}$,
whose entries $R(s,s')$ give the mean number of children of type $s'$ for a parent of type $s$; see \cite[Section V.2]{AN}.

We next turn our attention to the dynamics with ancestral memory. For any individual at generation $n\geq 0$, say $\mathfrak{z}$, we consider the empirical distribution of its ancestral types,
\begin{equation} \label{E:empmeas}
L_{\mathfrak{z}}=\frac{1}{n+1}\sum_{j=0}^n \delta_{s_j},
\end{equation}
where  $(s_0, \ldots, s_n)$ is the sequence of types of its forebears. That is,  $s_n$ is the type of $\mathfrak{z}$, $s_{n-1}$ that of its parent, ..., and $s_0$ that of its first ancestor. Imagine that when $\mathfrak{z}$ reproduces, a type is activated at random\footnote{A related population model with ancestral memory, where the activated type is that of the forebear at some random past generation, has been also considered recently by the author in \cite{BerBer}. However the latter is not explicitly solvable.}
  according to  $L_{\mathfrak{z}}$. In other words, we may think that a type is sampled uniformly from an urn with
 composition given by  the ancestral types of $\mathfrak{z}$.
Then for any  $\mathbf{n}=(n(s'))_{s'\in S}\in \N^S$, 
the conditional probability given the activated type is $s$,  that $\mathfrak{z}$ has exactly $n(s')$ children of type $s'$ for every $s'\in S$, equals $p(s,\mathbf{n})$.

Of course, such random dynamics where the frequency of an event is proportional to its past occurrence
 have been widely considered in the literature,  notably in models with preferential attachment, with linear reinforcement, and of self-interacting Markov chains (see \cite{BWZ} for many precise references). 
It should be understood that, except for a few degenerate cases, letting reproduction depend on the ancestral memory undermines essential features of branching processes. The 
 future evolution given the family of types of individuals observed at a present generation is no longer  conditionally  independent of the past,
and at this level of generality, even the most basic questions about the asymptotic behavior of population models with ancestral memory seem challenging to analyze quantitatively. 

Nonetheless, a special case, called reinforced Galton--Watson process, has been effectively studied in a series of joint works \cite{BM1, BM2, BM3} with Bastien Mallein. There, the type
of an individual is simply the size of its sibling --hence $S$ is a finite set of positive integers-- and the entries of the mean reproduction matrix  have the specific form
\[R(s,s')= q s \indset{s=s'} + (1-q) s'\nu(s'),\] where $0\leq q \leq 1$ is a reinforcement parameter and $\boldsymbol{\nu}=(\nu(s'))_{s'\in S}$ a sub-probability measure on $S$. 
In words, when the type $s$ is activated for reproduction, with probability $q$ the parent begets exactly $s$ children  (thus each of type $s$), and with complementary probability $1-q$, an independent number of children is produced according to  $\boldsymbol{\nu}$\footnote{Allowing $\boldsymbol{\nu}(S)<1$  enables having no children with positive probability,  $\nu(0)=1-\boldsymbol{\nu}(S)$.}.
We were able to adapt to this setting deep ideas of Philippe Flajolet and co-authors \cite{FGP, FDP} on analytical urns, which combine a variety of tools including  notably transport equations and the analysis of singularities of generating functions. 

The  present work chiefly relies on the observation that the same approach applies as well in another situation, namely when the mean reproduction  matrix  has rank one.
More precisely, we consider   a nonnegative  function $\m\in [0,\infty)^S$ that we often view as a column vector with nonnegative entries $m(s)$, and implicitly exclude the trivial case when $\m$ is identically $0$. Similarly, we consider a probability measure $\Sigma$ on $S$ with full support that we often view  as a row vector with  entries $\sigma(s)>0$ and  $\sum \sigma(s)=1$. We use standard matrix product notation and assume henceforth that the mean reproduction matrix has the form 
\[ R = \m \Sigma. \]
In words,   the mean number of children of type $s'$ when the type $s$ has been activated for reproduction is simply the product $R(s,s')=m(s)\sigma(s')$.
So the activated type determines the mean size of the offspring; however the repartition of the types of the offspring is always $\Sigma$, independently of the activated type. 

We denote  the population with ancestral memory at generation $n$ by $\Z_n$ and its size (number of individuals) by $|\Z_n|$. We write   $\P_s$ for the probability measure induced by the dynamics with ancestral memory when  the initial population is reduced to a single ancestor  of type $s$, and simply $\P=\P_{\Sigma}$ when the type of the sole ancestor is random and distributed according to $\Sigma$.
A special case of our main result determines the behavior of the mean size of the population at a large generation; it can be stated as follows.

For every $s\in S$ with $m(s)\neq 0$, we have
\begin{equation}\label{E:T1}
\E_s(|\Z_n|) \sim c(s)n^{m(s)/m_* -1} M^{n} , \qquad \text{ as }n\to \infty, 
\end{equation}
where\footnote{Recall that we use the matrix product notation and that for every $x>0$, the function   $x^{\m+1}: s\mapsto x^{m(s)+1}$ on $S$ is viewed as a column vector. Thus, in \eqref{E:ms*}, one has to read read 
\[ \Sigma(x^{\m+1})= \sum_{s\in S} \sigma(s) x^{m(s)+1}. \]
}
  \begin{equation} \label{E:ms*}
m_* = \max_S \m
 \quad, \quad \frac{1}{M} = \int_1^{\infty} \frac{\dd x}{\Sigma(x^{\m+1}) },
\end{equation}
and $c(s)$ is some explicit positive finite constant depending on the type $s$. In words,  $M$  governs the exponential growth (or decay) of the mean population size, and hence plays the role of the Malthusian parameter.
Interestingly, there is a rational correction with negative exponent $m(s)/m_* -1$ to the exponential growth, except when $m(s)=m_*$; a similar phenomenon was observed for reinforced Galton--Watson processes in \cite{BM1}.

It is interesting to point out that, except in the degenerate case when the function $\m$ is constant, Jensen's inequality yields $\Sigma(x^{\m+1})>x^{\Sigma \m +1}$ for all $x>1$, and  
hence
  \begin{equation} \label{E:rmu}\Sigma \m < M.
\end{equation}
The interpretation of \eqref{E:rmu}  is that the Malthusian parameter $\Sigma \m$ of  the multi-type branching process\footnote{Plainly, for the branching process without memory, if the initial population is reduced to a single ancestor of type $s$, then the mean number of individuals of type $s'$ at generation $n\geq 1$ equals
$ m(s)(\Sigma \m)^{n-1}\sigma(s') $, where $\Sigma \m>0$ is the unique positive eigenvalue of $R=\m \Sigma$.}
 is strictly  smaller than that for the dynamics with ancestral memory. Loosely speaking,  the latter is a reinforced version  of the former,
where reinforcement means that the likelihood of previously observed reproduction events is higher; see the survey \cite{Pem}. As a consequence, if  an individual $\mathfrak{z}$ has a large proportion of forebears with prolific types (meaning with $m(s)$ large), then in turn  $\mathfrak{z}$ is more likely to have a large offspring. This creates  a feedback loop that 
reinforces the prolificness of the population model, as \eqref{E:rmu} confirms.

Our main result, Theorem \ref{TM}, encompasses  \eqref{E:T1} as a special case. It  will be stated and established in Section 2. Its proof is split into two main steps. The first relates the dynamics with ancestral memory to a  Yule process for which iid marks are assigned to all individuals. The main outcome of this relation, Theorem \ref{T2}, identifies the generating function of a sequence of moments. The second step follows the strategy of Flajolet \textit{et al.}
We check that  this generating function can be extended analytically  to a so-called $\Delta$-domain, and  that its singularities can be analyzed. The proof of Theorem \ref{TM} is  completed  from a transfer theorem. 

Finally, 
Section 3 offers some applications of large deviations techniques to the finer study of the empirical distribution of ancestral types of individuals, somehow in the same direction as
in the study of empirical measures of reinforced Markov
chains
 in \cite{BW}. 
Our main output there, Theorem \ref{T3},  strengthens \eqref{E:rmu} considerably. Informally speaking, it shows that as $n\to \infty$, the mean number of individuals $\mathfrak{z}$ at generation $n$ such that $L_{\mathfrak{z}}$ belongs to any given subset of probability measures, is always larger for the dynamics with ancestral memory than for the benchmark model of multi-type branching process. 

\section{Main result}

Stating the main result of this work requires further notation. Fix some function $\blambda\in \R^S$.
For any individual $\mathfrak{z}\in \Z_n$ in the population at generation $n$, 
we set
\[\mathcal E_{\blambda}(\mathfrak{z})= \exp\left( (n+1)  L_{\mathfrak{z}} \blambda\right)= \exp\left( \sum_{j=0}^n \lambda(s_j)\right)  , \]
where $L_{\mathfrak{z}}$  is the empirical distribution of the ancestral types, see \eqref{E:empmeas}. 
We are interested in estimating the mean 
\begin{equation} \label{E:quantint}
\E_s\left( \sum_{\mathfrak{z}\in \Z_n} \mathcal E_{\blambda}(\mathfrak{z})\right) 
\end{equation}
for a  large generation $n$. 

For any $x>0$ , we view as usual the function on $S$,
\[\e^{\blambda}x^{\m+1}: s\mapsto \e^{\lambda(s)} x^{m(s)+1},\]
as a column vector, and write
\begin{equation} \label{E:a}
\frac{1}{a(x)}=\Sigma(\e^{\blambda}x^{\m+1})=\sum_{s\in S} \sigma(s)\e^{\lambda(s)}  x^{m(s)+1} 
\end{equation}
and define
\begin{equation} \label{E:A}
  A(x) = \int_1^x a(y) \dd y.
\end{equation}

\begin{theorem} \label{TM} When $n\to \infty$, we have for every $s\in S$ with $m(s)\neq 0$  that 
\[\E_s\left( \sum_{\mathfrak{z}\in \Z_n} \mathcal E_{\blambda}(\mathfrak{z})\right) 
 \sim  c(s, \blambda)\, n^{-1+m(s)/m_*} A(\infty)^{-n} , \ \]
where 
\[c(s, \blambda) = \left( c_* A(\infty)\right)^{-m(s)/m_*} \frac{\e^{\lambda(s)}}{\Gamma(m(s)/m_*)} \,\]
and  $c_*$ is  given in \eqref{E:c_*} below.
\end{theorem}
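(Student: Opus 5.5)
The plan has three stages: reduce the mean to a one-lineage computation, identify its generating function in closed form via Theorem~\ref{T2}, and apply singularity analysis (Flajolet--Odlyzko).

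\emph{Step 1: reduction to a lineage computation.} By linearity of expectation, and because offspring types are always distributed according to $\Sigma$ whatever type is activated, the quantity \eqref{E:quantint} is a sum over ancestral type sequences $(s_0=s,s_1,\dots,s_n)$. Write $\mu_k=\sum_{j\le k} m(s_j)$ for the partial sums. Given the first $k+1$ types of the lineage, the mean number of children of type $s'$ is $\frac{1}{k+1}\mu_k\,\sigma(s')$. Hence
\[
u_n(s):=\E_s\Big(\sum_{\mathfrak z\in\Z_n}\mathcal E_{\blambda}(\mathfrak z)\Big)
=\e^{\lambda(s)}\sum_{s_1,\dots,s_n}\prod_{i=1}^n\sigma(s_i)\e^{\lambda(s_i)}\;\frac{1}{n!}\prod_{k=0}^{n-1}\mu_k .
\]
This is exactly the quantity encoded by the Yule process with i.i.d.\ marks of Theorem~\ref{T2}.

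\emph{Step 2: closed form of the generating function.} From Theorem~\ref{T2} I expect to read off
\[
\sum_{n\ge0}u_n(s)z^n=\e^{\lambda(s)}\big(A^{-1}(z)\big)^{m(s)},\qquad 0\le z<A(\infty),
\]
where $A^{-1}$ is the inverse of the increasing bijection $A:[1,\infty)\to[0,A(\infty))$. As a sanity check, take $\m\equiv m$ and $\blambda=0$. Then $A(x)=(1-x^{-m})/m$, so $(A^{-1}(z))^m=1/(1-mz)$, whose coefficients are $u_n=m^n$, as they should be.

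\emph{Step 3: the dominant singularity.} Let
\[
c_*=m_*\sum_{s':\,m(s')=m_*}\sigma(s')\e^{\lambda(s')}.
\]
As $y\to\infty$ we have $a(y)\sim m_*/(c_*\,y^{m_*+1})$, hence $A(\infty)-A(x)\sim 1/(c_*x^{m_*})$. Inverting this, as $z\uparrow A(\infty)$,
\[
\big(A^{-1}(z)\big)^{m(s)}\sim \big(c_*(A(\infty)-z)\big)^{-m(s)/m_*}
=\big(c_*A(\infty)\big)^{-m(s)/m_*}\big(1-z/A(\infty)\big)^{-m(s)/m_*}.
\]
The standard transfer theorem then gives coefficients
\[
\e^{\lambda(s)}\big(c_*A(\infty)\big)^{-m(s)/m_*}\,\frac{n^{m(s)/m_*-1}}{\Gamma(m(s)/m_*)}\,A(\infty)^{-n},
\]
which is the claimed asymptotic with $c(s,\blambda)$ as stated; this is how I would define $c_*$ in \eqref{E:c_*}. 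The hypothesis $m(s)\neq0$ is needed so that the exponent is nonzero and the function is genuinely singular.

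\emph{Main obstacle: $\Delta$-analyticity.} The hardest step is showing that $z\mapsto (A^{-1}(z))^{m(s)}$ extends analytically to a $\Delta$-domain at $A(\infty)$, with no other singularity on the circle $|z|=A(\infty)$, and that the expansion above holds uniformly in that domain. I would first work in the variable $w=x^{-m_*}$ near $x=\infty$. There $A(\infty)-A(x)$ is analytic in a slit neighbourhood of $w=0$, with the form $w\,(c_*^{-1}+o(1))$ (possibly involving non-integer powers $w^{(m_*-m(s'))/m_*}$). Inverting it locally by a Rouché-type argument gives the sector near $A(\infty)$.

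Away from that point, I would continue $A^{-1}$ along paths and use two facts. First, $A'=a$ has no zeros on the relevant region, since $1/a$ is a polynomial-like sum with positive coefficients. Second, positivity of the coefficients $u_n$ forces $|F(z)|<F(|z|)$ off the positive axis, which should exclude other singularities on the circle. If the direct argument is delicate, I would mimic the corresponding step in \cite{BM1, BM3}, which handles exactly this kind of inverse of an integral of the reciprocal of a generalized polynomial.
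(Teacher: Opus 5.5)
Your Steps 1--3 and the local inversion near $A(\infty)$ follow the paper's route. That route is Lemma~\ref{L1} and Theorem~\ref{T2}, then Lemma~\ref{L:extsing} in the variable $x^{-m_*}$ with a Rouch\'e argument, then transfer, and your constant $c_*$ and asymptotics are correct. \textbf{The gap is in the global half of the $\Delta$-analyticity, which is where the real work lies, and neither of your two facts handles it.} That $a=A'$ has no zeros only gives local invertibility of $A$. The way $B$ actually becomes singular is by escaping to infinity along a path (explosion), and nothing you propose bounds $B$ near a point $A(\infty)\e^{i\theta}$ with $\theta\neq 0$.

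The inequality $|F(z)|<F(|z|)$ holds only inside the disc. Since $F(|z|)\to\infty$ as $|z|\to A(\infty)$, it gives no bound on the circle. Positivity, even with aperiodicity, cannot exclude other singularities on the circle of convergence. For example, $1/(1-z)+1/(1-z^2)$ has positive, aperiodic coefficients and satisfies $|F(z)|<F(|z|)$ off $\R_+$, yet it is singular at $-1$. Deferring to \cite{BM1, BM3} does not fill this.

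The missing idea is to work with $g=\ln B$. It solves the autonomous equation $g'=\varphi(g)$, where $\varphi(w)=\sum_s\sigma(s)\e^{\lambda(s)+m(s)w}$ is entire. So the continuation along a ray, namely the solution of $g_\theta'=\e^{i\theta}\varphi(g_\theta)$, $g_\theta(0)=0$, can fail only by blow-up. Because $|\varphi(w)|\le\varphi(\mathrm{Re}\,w)\le\varphi(|w|)$, you get $|g_\theta'|\le\varphi(|g_\theta|)$.

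Hence a strict initial lag $|g_\theta(t_2)|<g_0(t_1)$ with $t_1<t_2$ propagates by comparison to $|g_\theta(t_2+t)|<g_0(t_1+t)$. This gives an explosion time $\zeta(\theta)\ge A(\infty)+t_2-t_1$. The lag, uniform over $|\theta|>\epsilon$, comes from small times: $g_\theta(t)\sim(\Sigma\e^{\blambda})\,t\,\e^{i\theta}$, so $|g_\theta'|$ is bounded by $\varphi$ at roughly $ct\cos\epsilon$, while $g_0'$ is $\varphi$ at roughly $ct$.

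Your positivity idea could in fact supply this initial lag, if applied to $g$ (whose Taylor coefficients are nonnegative) at one fixed small radius. It only works combined with the ODE comparison, though. That comparison is Lemma~\ref{L:extbulk}. Together with your local sector and the disc of radius $A(\infty)$, it yields the $\Delta$-domain needed for transfer.
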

Of course,  \eqref{E:T1} follows from Theorem \ref{TM}  by taking $\blambda \equiv 0$ and $M=1/A(\infty)$.

\subsection{Connection to a Yule process with iid marks}

We start by making the quantity of interest \eqref{E:quantint} more explicit. In this direction we introduce a sequence $(\xi_k)_{k\geq 0}$ of i.i.d. variables on $S$ with law $\Sigma$,
and  an independent  sequence $(u_\ell)_{\ell\geq 1}$, where each variable $u_\ell$ has the uniform distribution on $\{0, \ldots, \ell-1\}$ and is further independent of the other
$u_n$'s.
It is convenient to make  a slight abuse of notation and write $\P$ for the probability measure induced by these two sequences and $\P_s$ for its conditional version 
given $\xi_0=s$; so $\P= \P_{\Sigma}$ in the usual notation.

\begin{lemma} \label{L1} For every $n\geq1$ and $s\in S$, there is the identity
\[ \E_s\left( \sum_{\mathfrak{z}\in \Z_n} \mathcal E_{\blambda}(\mathfrak{z})\right)  = \e^{\lambda(s)} \E_s\left( \prod_{\ell=1}^n \e^{\lambda(\xi_\ell)} m(\xi_{u_\ell})\right).\]
\end{lemma}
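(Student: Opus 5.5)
We argue by induction on $n$, using a many-to-one computation along the ancestral lineage. Set
\[
F_n(s_0,\dots,s_n)=\E\left(\sum_{\mathfrak{z}\in\Z_n}\indset{\text{ancestral types of }\mathfrak{z}\text{ are }(s_0,\dots,s_n)}\right),
\]
the mean number of individuals at generation $n$ whose sequence of forebears' types is $(s_0,\ldots,s_n)$, under $\P_{s_0}$. The left-hand side of the lemma equals $\sum_{s_1,\dots,s_n} F_n(s,s_1,\dots,s_n)\exp\bigl(\sum_{j=0}^n\lambda(s_j)\bigr)$.

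\textbf{Recursion for $F_n$.} Condition on generation $n-1$. Each individual $\mathfrak{y}$ with ancestral types $(s_0,\dots,s_{n-1})$ activates type $s_{u}$, where $u$ is uniform on $\{0,\dots,n-1\}$ because $L_{\mathfrak{y}}$ gives mass $1/n$ to each forebear. Given the activated type $s_u$, the mean number of children of type $s_n$ is $m(s_u)\sigma(s_n)$. By linearity of expectation and the tower property (no independence between individuals is needed, only the conditional mean given the past),
\[
F_n(s_0,\dots,s_n)=F_{n-1}(s_0,\dots,s_{n-1})\,\sigma(s_n)\,\frac1n\sum_{j=0}^{n-1}m(s_j),
\]
with $F_0(s_0)=1$. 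Iterating gives
\[
F_n(s_0,\dots,s_n)=\prod_{\ell=1}^n \sigma(s_\ell)\,\E\bigl(m(s_{u_\ell})\bigr),
\]
where the $u_\ell$ are independent and $u_\ell$ is uniform on $\{0,\ldots,\ell-1\}$. Here we used that the generation-$(\ell-1)$ individual has exactly $\ell$ forebears including itself, so its activated index is uniform on $\{0,\dots,\ell-1\}$. By independence of the $u_\ell$, this product equals $\E\bigl(\prod_{\ell=1}^n m(s_{u_\ell})\bigr)\prod_{\ell=1}^n\sigma(s_\ell)$.

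\textbf{Conclusion.} Multiply by $\exp\bigl(\sum_{j=0}^n\lambda(s_j)\bigr)$ with $s_0=s$, and sum over $s_1,\dots,s_n$ with weights $\prod_{\ell\ge1}\sigma(s_\ell)$. This amounts to taking $s_\ell=\xi_\ell$ i.i.d. with law $\Sigma$, independent of the $u_\ell$, and conditioned on $\xi_0=s$. The result is exactly $\e^{\lambda(s)}\E_s\bigl(\prod_{\ell=1}^n \e^{\lambda(\xi_\ell)}m(\xi_{u_\ell})\bigr)$.

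\textbf{Main obstacle.} The only delicate point is justifying the recursion. We must check that, conditionally on the history up to generation $n-1$, the expected number of children of type $s_n$ of an individual is $\frac1n\sum_j m(s_j)\sigma(s_n)$. This follows by first averaging over the activated type, which is drawn according to $L_{\mathfrak{y}}$, and then using the mean reproduction matrix $R=\m\Sigma$. The memory dependence is harmless here because it only affects the law of the activated type, which depends on the lineage alone and not on other individuals.
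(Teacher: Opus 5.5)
Your proof is correct and follows essentially the paper's argument. The paper computes the mean number of generation-$n$ individuals with ancestral types $(s,s_1,\dots,s_n)$ \emph{and} prescribed activated generations $v_1,\dots,v_n$, namely $\frac{1}{n!}\prod_{\ell=1}^n\sigma(s_\ell)m(s_{v_\ell})$, and then sums over both the types and the $v_\ell$'s, whereas you average over the activated index at each generation before iterating, which gives the same product $\prod_{\ell=1}^n\sigma(s_\ell)\frac{1}{\ell}\sum_{j<\ell}m(s_j)$; your tower-property justification of the one-step recursion just spells out the paper's ``iteration over generations''.
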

\begin{proof}
Consider any sequence $s_1, \ldots, s_n$ in $S$ and any sequence $v_1, \ldots, v_n$ of integers with $0\leq v_\ell < \ell$ for $\ell=1, \ldots, n$.
Let $\Z_n(s_1, \ldots, s_n; v_1, \ldots, v_n)$ denote the set of individuals in the population at generation $n$ such that
the sequence of types along its ancestral lineage is $(s,s_1, \ldots, s_n)$, and further, for every $\ell=1, \ldots, n$,  $v_{\ell}$ is the generation that has been selected to beget the forebear at  generation $\ell$. On the one hand,  for every $\mathfrak{z}\in \Z_n(s_1, \ldots, s_n; v_1, \ldots, v_n)$, one has by definition that
\[\mathcal E_{\blambda}(\mathfrak{z})= \e^{\lambda(s)} \prod_{\ell=1}^n \e^{\lambda(s_\ell)}.\]
On the other hand, an iteration over generations enables us to translate rigorously the verbal description of the population dynamics with ancestral memory into
\[ \E_s(|\Z_n(s_1, \ldots, s_n; v_1, \ldots, v_n)|)= \frac{1}{n!} \prod_{\ell=1}^n \sigma(s_\ell)m(s_{v_\ell}).\]
The  summation over all sequences  $s_1, \ldots, s_n$  and  $v_1, \ldots, v_n$ yields our claim. 
\end{proof} 

Lemma \ref{L1} points at a connection to a Yule process with iid marks. 
Indeed, consider first the  infinite random recursive tree $\mathcal T$ on $\N=\{0,1, \dots\}$  rooted at $0$ and with oriented edges  $(u_{\ell},\ell)$ for  $\ell \geq 1$.
It is well known that $\mathcal T$ describes the genealogy of a standard Yule process where 
individuals  are enumerated in the order of their birth-time,
so  the root vertex $0$ represents the ancestor, $\ell\geq 1$ the $\ell$-th born individual, and $u_{\ell}$ the parent of $\ell$.  
In other words, $\mathcal T$ is the genealogical tree of the entire Yule population, where directed edges connect parents to children.

We next further assign the mark $\xi_{n}$ to every vertex $n\geq 0$; the interpretation being that individuals in the Yule process receive i.i.d. marks (or types) distributed according to $\Sigma$, independently of the genealogy. 
We write $Y_t$ for the size of the Yule population at time $t\geq 0$. The process $(Y_t)_{t\geq 0}$ is independent of the genealogical tree $T$ and its marks, and since $Y_t$ has the geometric distribution with parameter $\e^{-t}$, Lemma \ref{L1} immediately yields:

\begin{corollary} \label{C1} For every $t\geq 0$ and $s\in S$, we have the identity
\[  \e^{-t} \sum_{n=0}^{\infty} (1-\e^{-t})^n  \E_s\left( \sum_{\mathfrak{z}\in \Z_n} \mathcal E_{\blambda}(\mathfrak{z})\right)  =  \e^{\lambda(s)} \E_s\left( \prod_{\ell =1}^{Y_t-1} \e^{\lambda(\xi_\ell)} m(\xi_{u_\ell})\right) ,\]
where these quantities can be finite or infinite. 
\end{corollary}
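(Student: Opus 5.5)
The plan is to average Lemma \ref{L1} over the time of the Yule process. The key facts are that $Y_t$ is geometric with parameter $\e^{-t}$, i.e. $\P(Y_t = n+1) = \e^{-t}(1-\e^{-t})^n$ for $n\geq 0$, and that $(Y_t)$ is independent of the sequences $(\xi_k)$ and $(u_\ell)$. Write
\[f_s(n)=\E_s\left( \prod_{\ell=1}^n \e^{\lambda(\xi_\ell)} m(\xi_{u_\ell})\right)\]
for $n\geq 0$, with the convention $f_s(0)=1$ (empty product). Lemma \ref{L1} then reads
\[\E_s\left( \sum_{\mathfrak{z}\in \Z_n} \mathcal E_{\blambda}(\mathfrak{z})\right) = \e^{\lambda(s)} f_s(n)\]
for $n\geq1$.

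The first step is to check the case $n=0$ separately. Then $\Z_0$ is the single ancestor of type $s$ and $\mathcal E_{\blambda}=\e^{\lambda(s)}$, so the identity of Lemma \ref{L1} holds trivially for $n=0$ too.

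The second step is to condition on $Y_t$. Since $Y_t$ is independent of everything else and every integrand is nonnegative, Tonelli's theorem applies and there are no integrability issues:
\[\E_s\left( \prod_{\ell =1}^{Y_t-1} \e^{\lambda(\xi_\ell)} m(\xi_{u_\ell})\right) = \sum_{n=0}^\infty \P(Y_t=n+1) f_s(n) = \sum_{n=0}^\infty \e^{-t}(1-\e^{-t})^n f_s(n).\]
Both sides lie in $[0,\infty]$, which is why the statement allows the quantities to be infinite.

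The last step is to multiply by $\e^{\lambda(s)}$ and substitute the identity from the first step termwise; this gives the claimed formula. The only points needing care are the independence of $Y_t$ from the tree and marks, so that conditioning yields exactly $f_s(n)$, and the convention for $n=0$.
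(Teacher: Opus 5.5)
Your proof is correct and follows the paper's route: condition on the geometric variable $Y_t$, which is independent of the random recursive tree and the marks, and apply Lemma~\ref{L1} term by term. Your separate check of the case $n=0$ and your appeal to Tonelli for the nonnegative terms just make explicit what the paper compresses into the word ``immediately''.
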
 

We shall now study the right-hand side of the identity in Corollary \ref{C1}, which we simply denote by 
\[ \Phi_s(t) =\e^{\lambda(s)} \E_s\left( \prod_{\ell =1}^{Y_t-1} \e^{\lambda(\xi_\ell)} m(\xi_{u_\ell})\right) .\]
The next key step of our analysis is the observation that 
each function $\Phi_s$ has a simple expression in terms 
of the linear combination
\begin{equation} \label{E:PhiSigma}
\Phi(t) = \sum_{s\in S} \sigma(s) \Phi_s(t) = \E\left(\e^{\lambda(\xi_0)} \prod_{\ell =1}^{Y_t-1} \e^{\lambda(\xi_\ell)} m(\xi_{u_\ell})\right).
\end{equation}

\begin{lemma} \label{L2} For every $s\in S$ and all $t\geq 0$, there is the identity
\[\Phi_s(t) = \exp\left(\lambda(s) + m(s)\int_0^t \Phi(x)\dd x -t\right).\]
\end{lemma}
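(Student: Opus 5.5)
The plan is to prove Lemma \ref{L2} by decomposing the Yule tree at the root, that is, by conditioning on the birth times of the root's children. This yields a Poisson exponential formula.

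First, reorganise the product in $\Phi_s(t)$ vertex by vertex. The individuals alive at time $t$ are the vertices $0,1,\dots,Y_t-1$ of $\mathcal T$, and each non-root vertex $\ell$ in this set contributes the factor $\e^{\lambda(\xi_\ell)}m(\xi_{u_\ell})$. Regrouping the $m$-factors according to the parent, we get
\[\e^{\lambda(\xi_0)}\prod_{\ell=1}^{Y_t-1}\e^{\lambda(\xi_\ell)}m(\xi_{u_\ell}) = \prod_{v=0}^{Y_t-1} \e^{\lambda(\xi_v)}\, m(\xi_v)^{c_t(v)},\]
where $c_t(v)$ is the number of children of $v$ born by time $t$. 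Next, realise the marked Yule process in continuous time. Each individual gives birth at rate $1$, and individuals receive i.i.d.\ marks with law $\Sigma$, independently of everything else. Enumerating individuals by birth order recovers the random recursive tree with the uniform parent variables $u_\ell$, together with the independent process $(Y_t)$. This is exactly the setting of Corollary \ref{C1}, so $\Phi_s(t)$ equals the expectation of the product above when the root has mark $s$.

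The key step is the branching decomposition at the root. Under $\P_s$, the root has mark $s$, and its children are born at the atoms $r_1<r_2<\dots$ of a rate-one Poisson process on $[0,t]$. The child born at time $r_i$ founds a subpopulation that evolves as an independent marked Yule process run for time $t-r_i$; its root mark is distributed according to $\Sigma$. The product over the descendants of that child, including the child's own factor $\e^{\lambda(\xi)}$, has exactly the form appearing in \eqref{E:PhiSigma} with $t$ replaced by $t-r_i$. Its conditional expectation is therefore $\Phi(t-r_i)$. The root itself contributes $\e^{\lambda(s)}m(s)^{N}$, where $N$ is the number of atoms in $[0,t]$. Conditioning on the atoms and using the independence of the subtrees gives
\[\Phi_s(t)=\e^{\lambda(s)}\,\E\Big(\prod_{i} m(s)\Phi(t-r_i)\Big).\]

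The exponential formula for Poisson point processes, $\E\prod_i f(r_i)=\exp\int_0^t (f(r)-1)\,\dd r$ for nonnegative $f$, then yields
\[\exp\Big(\lambda(s)+m(s)\int_0^t\Phi(x)\,\dd x - t\Big),\]
which is the claim of Lemma \ref{L2}.

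The main point requiring care is justifying these manipulations when quantities may be infinite. Since all factors are nonnegative, conditional expectations and the exponential formula hold in $[0,\infty]$ by monotone convergence. For instance, one can first truncate $f$ at a level $K$ and let $K\to\infty$; both sides are then interpreted as $+\infty$ simultaneously. One should also check that the continuous-time construction reproduces exactly the law of $(u_\ell)$, $(\xi_\ell)$ and $Y_t$ used in Corollary \ref{C1}, including the independence of $Y_t$ from the marked tree. This is the standard embedding of random recursive trees into Yule processes.
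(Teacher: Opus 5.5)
Your proposal is correct and is essentially the paper's second proof (by Poisson calculus). You decompose at the root, observe that the root's children are born at the atoms of a rate-one Poisson process on $[0,t]$, note that each child's subtree contributes a factor with mean $m(s)\Phi(t-r)$, and apply the exponential formula. The differences are cosmetic: you average over the child's mark immediately, whereas the paper uses a marked Poisson process with intensity $\dd t\otimes\Sigma$ and means $m(s)\Phi_{s'}(t-t')$, and you handle possibly infinite values by truncation and monotone convergence rather than by appealing to the finiteness of the intensity.
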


We propose two proofs of Lemma \ref{L2},  which both  rely on the branching property of the Yule process, but from two different view points.
The first is actually a partial proof only; it 
argues that the function $\Phi_s$ satisfies a kind of Lotka-Volterra differential equation which stems from the branching property applied at the first birth event.
 This approach is  standard, but has the drawback that its validity is limited to times $t$ before  
 the explosion time 
\[ T^*=\sup_{x>0} \inf\{t\geq 0: \Phi(t)>x\},\]
which will be determined in the forthcoming Corollary \ref{C2}. 
The second  proof relies on  the fact that  the times when an individual begets children in a Yule process are described by a Poisson process, and Poissonian calculus.
 This argument applies no matter whether explosion has already occurred or not.
\begin{proof}[Partial proof of Lemma \ref{L2} by differential calculus] To start with, work under $\P_s$ conditionally on the event that the first birth  in the Yule process occurs at time $t'<t$ and that the first child receives the mark $s'$.
We distinguish the next children born after time $t'$, depending on whether they descend from the first child or not.
Since  $\xi_{u_1}=s$, 
the branching property of the marked Yule process yields the decomposition
\[ \e^{\lambda(s)} \prod_{\ell =1}^{Y_t-1} \e^{\lambda(\xi_\ell)} m(\xi_{u_\ell}) = m(s) \eta \eta',\]
where $\eta$ and $\eta'$ are independent, and $\eta$ (respectively, $\eta'$) is distributed as 
\[ \e^{\lambda(\xi_0)} \prod_{\ell =1}^{Y_{t-t'}-1} \e^{\lambda(\xi_\ell)} m(\xi_{u_\ell}) \]
under $\P_s$ (respectively, under $\P_{s'}$). 

Now work again under the unconditional law $\P_s$. Since   the birth of the first child occurs at an exponentially distributed time
and its mark is independent and distributed according to $\Sigma$, the decomposition above translates into 
\[ \Phi_s(t) = \e^{-t}\e^{\lambda(s)} + m(s) \int_0^t \e^{-t'} \Phi_s(t-t') \Phi(t-t') \dd t',\]
Changing the variables $t-t'=u$ in the integral gives 
\[ \e^t\Phi_s(t) = \e^{\lambda(s)}+ m(s) \int_0^t \e^u\Phi_s(u) \Phi(u) \dd u ,\]
and we arrive at a generalized Lotka--Volterra equation for $t<T^*$
\[ \Phi_s'(t) =  \Phi_s(t)(m(s) \Phi(t)-1)\]
with boundary condition $\Phi_s(0)= \e^{\lambda(s)}$. Its unique solution is that given in the statement. 
\end{proof}

\begin{proof}[Proof of Lemma \ref{L2} by Poisson calculus] We work under $\P_s$ and 
recall that children in the Yule process receive at birth an independent mark distributed according to $\Sigma$. 
So the point process that records the birth-times and the marks
of the children of the ancestor is described by a Poisson point process on $\R_+\times S$ with intensity $\dd t \otimes \Sigma$. 
By the branching property, conditionally on the latter, each child, say born with mark $s'$,  
generates in turn an independent marked Yule process  distributed according to $\P_{s'}$.

Each child of the ancestor, say born at time $t'<t$ and with mark $s'$,  thus contributes together with its descent to the product 
\[\prod_{\ell =1}^{Y_t-1} \e^{\lambda(\xi_\ell)} m(\xi_{u_\ell}) \]
 by a factor which has the same law as 
 \begin{equation} \label{E:decompos}
 m(s)\e^{\lambda(\xi_0)}\prod_{\ell =1}^{Y_{t-t'}-1} \e^{\lambda(\xi_\ell)} m(\xi_{u_\ell})
 \end{equation}
  under $\P_{s'}$, and as a consequence, has expectation 
$m(s)\Phi_{s'}(t-t')$.
By the Laplace formula for Poisson measures, this yields the identity
\begin{align*}
 \Phi_s(t) &= \exp\left(\lambda(s) +\sum_{s'\in S} \sigma(s')\int_0^t (m(s) \Phi_{s'}(t-t')-1)\dd t' \right) \\
 &= \exp\left(\lambda(s)+ m(s) \int_0^t \Phi(x)\dd x - t\right).
 \end{align*}
We stress that our application of the Laplace formula is legit  even though the quantities 
\eqref{E:decompos} are not bounded, because the intensity of the Poisson measure is finite.
\end{proof}
We can now determine when and where  explosion occurs. Recall the notation  \eqref{E:A}, and also define
 \[ \alpha= \sup\{x>1: A(x)<1\}.\]
In words,  if $A(\infty)\leq 1$, then $ \alpha= \infty$;  otherwise, if $A(\infty)>1$, then  $ \alpha\in (1, \infty)$ is the unique solution to $A(\alpha)=1$. 
We further introduce  the function 
\begin{equation} \label{E:bij} \Psi(t)=  \exp\left( \int_0^t \Phi(x)\dd x\right), \qquad t\geq 0.\end{equation}

\begin{corollary} \label{C2} There is the identity
\[ \alpha =\Psi(T^*) , \]
and we can distinguish the following three cases:
\begin{itemize}  
\item[(i)] (Supercritical case) If $A(\infty)<1$, then the explosion time  is finite and given by 
\[T^* = -\log(1-A(\infty)).\]
We have also   $\Phi(t)= \infty$ for all $t\geq T^*$. 

\item[(ii)] (Critical case) If $A(\infty)=1$, then   $\alpha = T^*=\infty$. 
 
\item[(iii)] (Subcritical case)
 If $A(\infty)> 1$, then  $\alpha < \infty$ and $ T^*=\infty$. 
 
\end{itemize}
\end{corollary}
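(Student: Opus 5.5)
We derive an autonomous ODE for $\Psi$ and solve it. Averaging the identity of Lemma \ref{L2} against $\Sigma$ gives
\[\Phi(t)=\sum_{s\in S}\sigma(s)\e^{\lambda(s)}\e^{-t}\Psi(t)^{m(s)}.\]
Since $\Psi'=\Phi\Psi$, this becomes
\[\Psi'(t)=\e^{-t}\sum_{s}\sigma(s)\e^{\lambda(s)}\Psi(t)^{m(s)+1}=\frac{\e^{-t}}{a(\Psi(t))},\qquad \Psi(0)=1,\]
with $a$ as in \eqref{E:a}. Separating variables, $a(\Psi)\,\dd\Psi=\e^{-t}\dd t$, so for $t<T^*$
\[A(\Psi(t))=1-\e^{-t}.\]
Here one must be careful: $\Phi$ is finite and continuous on $[0,T^*)$ by the definition of $T^*$, and the second (Poisson) proof of Lemma \ref{L2} shows the formula holds for all $t$ with values in $[0,\infty]$. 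Hence $\Psi$ is absolutely continuous and increasing on $[0,T^*)$, and the chain rule for $A\circ\Psi$ is justified there.

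Next, $\Psi$ is increasing with $\Psi(T^*)=\lim_{t\uparrow T^*}\Psi(t)\in(1,\infty]$. Since $A$ is strictly increasing and continuous on $[1,\infty)$, we can invert to get $\Psi(t)=A^{-1}(1-\e^{-t})$ whenever $1-\e^{-t}<A(\infty)$. We then identify $T^*$ case by case, using that $\Phi=\e^{-t}\sum\sigma\e^{\lambda}\Psi^{m}$ blows up exactly when $\Psi$ does, since $\m\not\equiv0$ and $\sigma$ has full support.

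In case (iii), $1-\e^{-t}<1<A(\infty)$ for every $t$, so $\Psi(t)=A^{-1}(1-\e^{-t})<\alpha$ stays finite. Thus $T^*=\infty$ and $\Psi(\infty)=A^{-1}(1)=\alpha$.

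In case (ii), $\Psi(t)=A^{-1}(1-\e^{-t})$ is finite for all finite $t$ and tends to $\infty=\alpha$. Hence $T^*=\infty$.

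In case (i), $\Psi(t)\to\infty$ as $1-\e^{-t}\uparrow A(\infty)$, that is, as $t\uparrow -\log(1-A(\infty))$. So $T^*$ is this value and $\Psi(T^*)=\infty=\alpha$.

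It remains to show that $\Phi(t)=\infty$ for $t\ge T^*$ in case (i). Since $\Psi(T^*-)=\infty$, we have $\int_0^{T^*}\Phi=\infty$, hence $\int_0^t\Phi=\infty$ for all $t\ge T^*$. Lemma \ref{L2}, valid with infinite values by the Poisson proof, then gives $\Phi_s(t)=\infty$ for every $s$ with $m(s)>0$, and therefore $\Phi(t)=\infty$.

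The main obstacle is this last step together with the rigorous ODE argument near $T^*$. Continuity of $\Phi$ below $T^*$ should follow from monotone convergence, since $\Phi$ is an expectation of a product over the nondecreasing process $Y_t$ (or directly from the integral identity); this justifies the separation of variables.
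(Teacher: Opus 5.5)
Your route is the paper's: average Lemma~\ref{L2}, obtain \eqref{E:changev}, integrate to get $A(\Psi(t))=1-\e^{-t}$ on $[0,T^*)$, and read off the three cases. The gap is in the case analysis.

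\emph{The gap.} In (ii) and (iii) you pass from ``$\Psi<\alpha$ on $[0,T^*)$'' to ``$T^*=\infty$''. In (i) you assume that $T^*$ cannot be strictly smaller than $-\log(1-A(\infty))$, even though your formula $\Psi(t)=A^{-1}(1-\e^{-t})$ is only known for $t<T^*$. Both steps need one fact: a finite explosion time forces $\Psi(T^*-)=\infty$. In other words, $\Phi$ cannot jump to $+\infty$ while $\int_0^t\Phi$ is still finite.

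The pointwise equivalence $\Phi(t)=\infty\Leftrightarrow\Psi(t)=\infty$ that you invoke does not give this. Consider the scenario where $\Phi$ is bounded on $[0,T^*]$, $\Psi(T^*)=B(1-\e^{-T^*})<\infty$, and $\Phi\equiv\infty$ on $(T^*,\infty)$. It is compatible with everything you use: $\int_0^t\Phi=\infty$ for all $t>T^*$, so the $[0,\infty]$-valued form of Lemma~\ref{L2} holds trivially there. Lemma~\ref{L2} is a self-consistency relation that this infinite continuation also satisfies, so extra probabilistic input is needed.

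\emph{The missing ingredient.} This is the first paragraph of the paper's proof. Finiteness of $\Phi$ propagates slightly forward in time, which gives $t<T^*\Leftrightarrow\Phi(t)<\infty$ and hence $T^*<\infty\Rightarrow\Phi(T^*)=\Psi(T^*)=\infty$. The paper obtains this by bounding the factors of newly born individuals by a constant $c$ and using exponential moments of the Yule process over a short time.

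One clean way to carry this out:
\begin{itemize}
\item[(a)] If $\Phi(t)<\infty$, then $K=\max_s \e^{-\lambda(s)}\Phi_s(t)<\infty$, because $\Sigma$ has full support.
\item[(b)] Apply the branching property at a small time $h$. The $Y_h$ factors already present are each at most $c$. Each of the $Y_h$ individuals alive at time $h$ starts an independent marked Yule process, whose factors over the remaining time $t$ have conditional mean at most $K$.
\item[(c)] Hence $\Phi(t+h)\le\E\bigl((cK)^{Y_h}\bigr)$, which is finite for small $h$ because $Y_h$ is geometric with parameter $\e^{-h}$.
\item[(d)] The formula for $\Phi$ together with monotonicity of $\Psi$ gives $\Phi(t')\le\e^{t''}\Phi(t'')$ for $t'\le t''$. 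So $\Phi$ is bounded on $[0,t+h]$, and therefore $t<T^*$.
\end{itemize}
Alternatively, write $\Phi(t)=\e^{-t}F(1-\e^{-t})$ with $F$ a power series with nonnegative coefficients. Pringsheim's theorem, applied with $F=\sum_s\sigma(s)\e^{\lambda(s)}B^{m(s)}$ below the radius of convergence, forces that radius to be at least $\min(1,A(\infty))$.

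With either addition, your three cases go through as written.

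A minor point: $\Phi$ is not monotone in $t$, since factors can be smaller than $1$. Its continuity below $T^*$ should therefore come from the integral identity rather than from monotone convergence.
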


\begin{proof} To start with, note from Lemma \ref{L2} that for any $s\in S$  and $t\geq 0$, there are the equivalences
\[ \Phi_s(t)=\infty \Longleftrightarrow  \Psi(t)=\infty \Longleftrightarrow \int_0^{t} \Phi(x) \dd x=\infty,\]
and in that case, we have also $\Phi_{s}(t')=\infty$ for any $t'\geq t$. 
Next, fix  $c>0$  sufficiently large so that $\e^{\lambda(s)} m(s') \leq c$ for all $s,s'\in S$, and take any $t<T^*$. It is easy to check that we can choose $t'>t$ sufficiently close to $t$ such that
$\E\left(c^{Y_{t'}-Y_t}\right)<\infty$, and then, from \eqref{E:PhiSigma}, that $\Phi$ remains bounded on $[0,t']$. As a consequence, there are the equivalence
\[ t < T^* \Longleftrightarrow \Phi(t)<\infty, \]
and the implication
\begin{equation} \label{E:zetafin}   T^*<\infty \Longrightarrow 
\Phi(T^*) = \Psi(T^*)=\infty.
\end{equation} 

Next, by substitution in \eqref{E:PhiSigma}, we get from Lemma \ref{L2} that
\[ \Phi(t)= \sum_{s\in S} \sigma(s) \exp\left(\lambda(s) + m(s) \int_0^t \Phi(x)\dd x - t\right). \]
and then, using the notations \eqref{E:a}  and \eqref{E:bij}, we  rewrite the preceding identity  for $t<T^*$   as 
 \begin{equation} \label{E:changev}
\Psi'(t) a(\Psi(t))= \e^{-t}.
 \end{equation}
Plainly, $\Psi: [0,T^*]\to [1, \Psi(T^*)]$ is  bijective. So recalling \eqref{E:A}, integrating \eqref{E:changev},   and performing a change of variables
yield
\[ A(\Psi(t))=1-\e^{-t}, \qquad 0\leq t \leq T^*.\]

We focus on this identity for the boundary point $t=T^*$, and assume first that $T^*<\infty$, so $\Psi(T^*)=\infty$ by \eqref{E:zetafin}. We then see that $A(\infty)<1$ and hence  $\Psi(T^*)=\alpha$.
Next assume $T^*=\infty$ and $\Psi(\infty)=\infty$. 
The same argument gives $A(\infty)=1$  and $\Psi(T^*)=\alpha$. 
Finally assume $T^*=\infty$ and $\Psi(\infty)<\infty$. Then $A(\Psi(\infty))=1$, which is possible if and only  if $A(\infty)>1$, and then again $\Psi(T^*)=\alpha$. 
\end{proof}

We can now establish the main result of this section.

\begin{theorem} \label{T2} 
For every $s\in S$ and $0<y<A(\infty)$, we have
 \[ \sum_{n=0}^{\infty} y^n\E_s\left( \sum_{\mathfrak{z}\in \Z_n} \mathcal E_{\blambda}(\mathfrak{z})\right) = \e^{\lambda(s)} B(y)^{m(s)}, \]
 where $B:[0,A(\infty))\to [1, \infty)$ is the inverse bijection of $A$.

\end{theorem}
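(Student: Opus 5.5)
Combining Corollary \ref{C1} with Lemma \ref{L2} and the identity $A(\Psi(t))=1-\e^{-t}$ from the proof of Corollary \ref{C2} should give the statement almost directly. Write $E_n(s)=\E_s\bigl(\sum_{\mathfrak{z}\in \Z_n} \mathcal E_{\blambda}(\mathfrak{z})\bigr)$ for brevity. Corollary \ref{C1} gives, for every $t\geq0$,
\[ \e^{-t}\sum_{n=0}^\infty (1-\e^{-t})^n E_n(s) = \Phi_s(t), \]
and Lemma \ref{L2} rewrites the right-hand side as $\e^{\lambda(s)}\e^{-t}\Psi(t)^{m(s)}$, where $\Psi$ is defined in \eqref{E:bij}. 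Cancelling the factor $\e^{-t}$ yields, with $y=1-\e^{-t}$,
\[ \sum_{n=0}^\infty y^n E_n(s) = \e^{\lambda(s)}\Psi(t)^{m(s)}, \]
valid in $[0,\infty]$ for all $t\geq 0$.

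The next step is to identify $\Psi(t)$ with $B(y)$. From the proof of Corollary \ref{C2}, $A(\Psi(t))=1-\e^{-t}=y$ for $0\le t\le T^*$. Since $a>0$, the map $A$ is a continuous strictly increasing bijection from $[1,\infty)$ onto $[0,A(\infty))$, so its inverse $B$ is well defined. Hence, whenever $t<T^*$ and $\Psi(t)<\infty$, we get $\Psi(t)=B(y)$, and the claimed formula holds.

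It then remains to check that every $y\in(0,A(\infty))$ is reached by some $t<T^*$ with $\Psi(t)$ finite. This is the only delicate point, and I would settle it with the trichotomy of Corollary \ref{C2}.
\begin{itemize}
\item In the supercritical case, $T^*=-\log(1-A(\infty))$, so $t<T^*$ is equivalent to $y<A(\infty)$. Moreover $\Phi$ is finite on $[0,T^*)$, hence $\Psi$ is finite there.
\item In the critical case, $T^*=\infty$ and $A(\infty)=1$, so every $y<1$ corresponds to a finite $t$ and a finite $\Psi(t)$.
\item In the subcritical case, $A(\infty)>1$, but $y=1-\e^{-t}$ only ranges over $[0,1)$. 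So the argument above yields the identity only for $y<1$, and the range $1\le y<A(\infty)$ needs a separate argument.
\end{itemize}

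For the subcritical range I would argue by analytic continuation, using nonnegativity of the coefficients. The power series $F(y)=\sum_n y^nE_n(s)$ has nonnegative coefficients. The function $y\mapsto \e^{\lambda(s)}B(y)^{m(s)}$ is real-analytic on $[0,A(\infty))$, since $B$ inverts the analytic, strictly increasing $A$ with $A'=a>0$. The two functions agree on $[0,1)$. By Pringsheim's theorem the radius of convergence $\rho$ of $F$ is a singular point of $F$. If $\rho<A(\infty)$, then $F$ would coincide near $\rho$ with the analytic continuation given by $B^{m(s)}$, so $\rho$ would not be singular, which is a contradiction. Hence $\rho\geq A(\infty)$, and the identity extends to all $0<y<A(\infty)$ by the identity theorem.

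Nonnegativity of the coefficients also guarantees that no finite/infinite ambiguity arises on this interval.
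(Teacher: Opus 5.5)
Your proposal is correct and follows the paper's proof. You combine Corollary \ref{C1} with Lemma \ref{L2}, identify $\Psi(t)$ with $B(1-\e^{-t})$ through $A(\Psi(t))=1-\e^{-t}$, split into the three cases of Corollary \ref{C2}, and extend to $1\le y<A(\infty)$ in the subcritical case by analytic continuation. Your appeal to Pringsheim's theorem (via the nonnegative coefficients) adds useful rigor, since it is exactly what ensures the power series actually converges up to $A(\infty)$, a point the paper compresses into ``uniqueness of analytic continuation''.
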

 
 \begin{proof}  We have seen  in the proof of Corollary \ref{C2} 
 that the function $\Psi: (0,T^*)\to (1,\alpha)$ in \eqref{E:bij}  is the  inverse of the  $\mathcal{C}^{\infty}$-diffeomorphism 
\[x\mapsto -\log(1-A(x)).\]
Then from Corollary \ref{C1}, Lemma \ref{L2}, and the definition \eqref{E:bij}, we get that
 for every $0\leq y \leq 1-\e^{-T^*}$, we have
 \[ \sum_{n=0}^{\infty} y^n \E_s\left( \sum_{\mathfrak{z}\in \Z_n} \mathcal E_{\blambda}(\mathfrak{z})\right)  = \e^{\lambda(s)} \Psi(-\log(1-y))^{m(s)} = \e^{\lambda(s)} B(y)^{m(s)}. \]
 If $A(\infty)\leq 1$, then we know that $A(\infty)=1-\e^{-T^*}$, so the formula in the statement is proved.
 If $A(\infty)> 1$, then $T^*=\infty$ and we have proved the formula for $y\in[0,1]$ only; however the extension to $y\in [0,A(\infty)]$ follows by uniqueness of analytic continuation.
  \end{proof} 

As a sanity check of Theorem \ref{T2}, consider the case when $S$ is a singleton. Then  $A(x)=\e^{-\lambda} m^{-1}(1-x^{-m})$ and $B(y)= (1-\e^{\lambda}my)^{-1/m}$. Theorem \ref{T2} gives
for $y<\e^{-\lambda}/m$,
\[ \sum_{n=0}^{\infty} y^n \E_s\left( \sum_{\mathfrak{z}\in \Z_n} \mathcal E_{\blambda}(\mathfrak{z})\right)  = \frac{\e^{\lambda}}{1-\e^{\lambda}my} =  \sum_{n=0}^{\infty}  y^n \e^{(n+1) \lambda }m^n.\]
We conclude that \eqref{E:quantint}
 equals $ \e^{(n+1) \lambda} m^n$, as we should expect. 

\subsection{Singularity analysis of a generating function} 
\label{sec:singularity}

 Theorem \ref{T2}  provides a remarkably simple expression for the generating function of the sequence of moments \eqref{E:quantint}
  in terms of the inverse function $B$ of $A$. As  amply demonstrated by Flajolet and co-authors, the analysis of singularities is a most efficient tool to extract sharp estimates of these moments
 from analytic properties of the generating function, and  the purpose of this section is to check that this methodology applies to our setting.
 
 We start by recalling  a few key notions in this area, and refer to \cite[Chapter VI]{FS} for a complete account. We say that a complex function $f$ 
 is  $\Delta$-analytic at a complex number $z_0$ if there exist  some $R>|z_0|$ and $\theta\in(0,\pi/2)$ such that $f$ is analytic in the domain
  \[ \Delta(\theta, R) = \{z\in \C: |z|< R, z\neq z_0, |\arg(z-z_0)|>\theta\}.\]
   
 \begin{proposition}\label{P1}  For every $s\in S$ with $m(s)>0$, there exists an extension $f$ of $B^{m(s)}$ which is  $\Delta$-analytic at $A(\infty)$. 
 Moreover,  as $z\in \Delta$ goes to $A(\infty)$, we have 
\[f(z) \sim \left( c_* (A(\infty)-z)\right)^{-m(s)/m_*},\]
where $m_*$ is given in \eqref{E:ms*} and 
\begin{equation} \label{E:c_*}c_*= m_*\sum_{s\in S} \sigma(s) \e^{\lambda(s)} \indset{m(s)=m_*}.\end{equation}
 \end{proposition}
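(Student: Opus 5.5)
We plan to work with the inverse variable $w=1/x$. The function $a(x)=1/\sum_s \sigma(s)\e^{\lambda(s)}x^{m(s)+1}$ extends analytically to the cut plane; near $x=\infty$ it behaves like $x^{-m_*-1}/(\sum_{m(s)=m_*}\sigma(s)\e^{\lambda(s)})$. The singular behaviour of $B$ at $A(\infty)$ comes from the behaviour of $A$ near $x=\infty$, so we use the variable $v=x^{-m_*}$. Write $D=\sum_{s}\sigma(s)\e^{\lambda(s)}\indset{m(s)=m_*}$, so that $c_*=m_*D$. Then
\[
A(\infty)-A(x)=\int_x^\infty a(y)\,\dd y ,
\]
and after the change of variables $y=u^{-1/m_*}$ we get
\[
A(\infty)-A(x)=\frac{1}{m_*}\int_0^{v} \frac{\dd u}{\sum_s \sigma(s)\e^{\lambda(s)}u^{(m_*-m(s))/m_*}} =: G(v).
\]
The integrand is $D^{-1}+O(u^{\beta})$, where $\beta=\min\{(m_*-m(s))/m_*: m(s)<m_*\}>0$. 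It is analytic in $u$ off $(-\infty,0]$, but it is generally not analytic at $u=0$ because the exponents are non-integer. This is the point needing care.

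The first step is local inversion. $G(v)=v/c_*+O(v^{1+\beta})$ for $v$ in a small sector $|\arg v|<\pi-\epsilon$. Moreover $G$ is analytic there with $G'(v)\to 1/c_*\ne0$. We show that $G$ is injective on a small sector around the positive axis and that its image contains a small indented sector $\{w:0<|w|<r, |\arg w|<\pi-2\epsilon\}$. The tool is a Rouché or argument-principle argument on small truncated sectors, using $G(v)-v/c_*=o(|v|)$ uniformly.

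Writing $w=A(\infty)-z$, we obtain an analytic inverse $v=H(w)=c_*w(1+o(1))$ on that sector. We then set $B(z)=H(A(\infty)-z)^{-1/m_*}$ using the principal branch. The extension is
\[
f(z)=B(z)^{m(s)}=H(A(\infty)-z)^{-m(s)/m_*}\sim\bigl(c_*(A(\infty)-z)\bigr)^{-m(s)/m_*}.
\]
Since $\arg(A(\infty)-z)=\pi-\arg(z-A(\infty))$, this local sector corresponds exactly to a neighbourhood of $A(\infty)$ minus a thin wedge around $[A(\infty),\infty)$, as the $\Delta$-domain requires.

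The second step, which I expect to be the main obstacle, is global continuation. We must show that $B^{m(s)}$ extends analytically to $\{|z|<R\}\setminus\{z_0\}$ minus the wedge, with $R>A(\infty)$ and $z_0=A(\infty)$. In other words, $z=A(\infty)$ must be the only singularity on the circle $|z|=A(\infty)$.

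Theorem \ref{T2} shows that the power series of $B^{m(s)}=\e^{-\lambda(s)}\sum_n y^n\E_s(\cdots)$ has nonnegative coefficients and radius of convergence at least $A(\infty)$. By the local analysis it is exactly $A(\infty)$, since it blows up there. So it is analytic in the open disc. It remains to continue it across each boundary point $z_1\ne A(\infty)$ with $|z_1|=A(\infty)$.

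For this I would use the analytic inverse function theorem. Continue $B$ along radii: inside the disc $B$ satisfies $B'(z)=1/a(B(z))=\sum_s\sigma(s)\e^{\lambda(s)}B(z)^{m(s)+1}$. This autonomous ODE continues $B$ as long as $B$ stays bounded and away from the branch locus of the powers; note that $B\neq0$ holds automatically. Boundedness near $z_1$ should follow from positivity of coefficients: $|B(z)|\le B(|z|)$, and a strict aperiodicity inequality holds away from the positive axis. The latter requires some coefficient in the expansion of $B$ to be positive at two coprime indices, which holds since $B(y)=1+y/a(1)+\ldots$ has all coefficients positive.

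If aperiodicity is delicate, I would fall back on working on a slightly smaller circle. By compactness, for each $z_1$ a limit of $B$ exists with $A$ locally invertible, since $a\ne0$ there. This yields continuation to a neighbourhood of the arc. Combined with the local sector this gives the $\Delta$-domain, and the stated asymptotic follows from the first step.
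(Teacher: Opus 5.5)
Your local analysis is sound and is the paper's: your $G(v)$ is exactly the function $\bar A$ of Lemma~\ref{L:extsing}, and the Rouch\'e-type inversion on a sector followed by $f(z)=H(A(\infty)-z)^{-m(s)/m_*}$ is how the paper obtains the singular expansion. The gap is in the global continuation, the step that needs a real idea. Positivity gives $|B(z)|\le B(|z|)$, and the aperiodicity (daffodil) lemma gives $|B(z)|<B(|z|)$ for $z\notin\R_+$. Both bounds are in terms of $B(|z|)$, which tends to $\infty$ as $|z|\to A(\infty)$, so they give no bound near a point $z_1\neq A(\infty)$ of the circle. Indeed, nonnegative aperiodic coefficients do not exclude other dominant singularities: $1/(1-z)+1/(1-z^2)$ has positive coefficients and is singular at $-1$. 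Your fallback (``by compactness, for each $z_1$ a limit of $B$ exists'') assumes exactly what has to be proved.

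The missing ingredient is a comparison principle for the autonomous ODE, which is what the paper uses. Work with $g=\ln B$, which solves $g'=\varphi(g)$ with $\varphi(w)=\sum_s\sigma(s)\e^{\lambda(s)+m(s)w}$ entire. This also removes your concern about the branch locus: one only needs $g$ to stay bounded. Since $m\geq 0$, we have $|\varphi(w)|\le\varphi(\Re w)\le\varphi(|w|)$. So along the ray $g_\theta(t)=\ln B(t\e^{i\theta})$ one gets $\frac{\dd}{\dd t}|g_\theta|\le|g_\theta'|\le\varphi(|g_\theta|)$, while $g_0'=\varphi(g_0)$ with $\varphi$ increasing on $\R_+$. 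Hence an initial lag $|g_\theta(t_2)|<g_0(t_1)$ with $t_1<t_2$ propagates to $|g_\theta(t_2+t)|<g_0(t_1+t)$ for all $t<A(\infty)-t_1$. The solution along the ray therefore survives up to radius $A(\infty)+t_2-t_1$. The lag must be uniform over $|\theta|\ge\epsilon$; the paper gets this from the small-$t$ behaviour $g_\theta(t)\sim ct\e^{i\theta}$. Your daffodil idea can also supply it, but only at a fixed radius $r_0<A(\infty)$. The coefficients of $\ln B$ are nonnegative, since $\varphi$ has nonnegative Taylor coefficients, and positive at indices $1$ and $2$. Applying the daffodil lemma to $\ln B$ gives $|\ln B(r_0\e^{i\theta})|<\ln B(r_0)$ for $\theta\neq 0$. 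Compactness of $\{\epsilon\le|\theta|\le\pi\}$ then yields $r_1<r_0$ with $\max_{|\theta|\ge\epsilon}|\ln B(r_0\e^{i\theta})|\le\ln B(r_1)$, and the comparison above does the rest. With this step added, you get analyticity on $\{|z|<R,\ |\arg z|>\epsilon\}$ for some $R>A(\infty)$. Together with the disk of radius $A(\infty)$ and your local sector, this gives the $\Delta$-domain.
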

  
  Theorem \ref{TM} follows immediately from Proposition \ref{P1} by transfer, as we now explain.

\begin{proof} [Proof of Theorem \ref{TM}] 
   For any $s\in S$ with $m(s)\neq 0$, we get by combining Theorem \ref{T2} and Proposition \ref{P1},   that as $z\in \Delta$ tends to $A(\infty)$, 
  \[ \sum_{n=0}^{\infty} z^n \E_s\left( \sum_{\mathfrak{z}\in \Z_n} \mathcal E_{\blambda}(\mathfrak{z})\right)  =\e^{\lambda(s)}  B(z)^{m(s)} \sim \e^{\lambda(s)}  \left( c_* (A(\infty)-z)\right)^{-m(s)/m_*} .\]
We rewrite the right-hand side as
\[ \e^{\lambda(s)}  \left( c_* A(\infty)\right)^{-m(s)/m_*} (1-z/A(\infty))^{-m(s)/m_*} \]
and deduce from the transfer theorem \cite[Corollary VI.1]{FS} that
 \[  \e^{-\lambda(s)}  A(\infty)^n \left( c_* A(\infty)\right)^{m(s)/m_*}  \E_s\left( \sum_{\mathfrak{z}\in \Z_n} \mathcal E_{\blambda}(\mathfrak{z})\right) \sim \frac{n^{-1+m(s)/m_*}}{\Gamma(m(s)/m_*)} ,
 \]
 which is Theorem \ref{TM}.
  \end{proof}
  
  The rest of this section is devoted to the proof of Proposition \ref{P1}, for which two analytic lemmas are needed.
  The purpose of the first is to extend $B$ near its critical point $A(\infty)$.  Recall the notation \eqref{E:ms*} and \eqref{E:c_*} and introduce the function 
  \[\bar A(x)= A(\infty)-A(x^{-1/m_*}), \qquad  x>0.\]  
  \begin{lemma} \label{L:extsing}
  There is some sufficiently small $r>0$ such that $\bar A$
  can be extended analytically to a slitted disk
  $D=\{z\in \C: |z|<r \text{ and } z\not\in \R_-\}$.
  Its derivative $\bar A'$ satisfies
  \[ \lim_{D\ni z\to 0} \bar A'(z)=1/c_*.\]
   \end{lemma}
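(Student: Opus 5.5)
We write $\bar A$ explicitly as an integral and then deform the integration path into the complex plane. Since $a(y)=1/\sum_{s}\sigma(s)\e^{\lambda(s)}y^{m(s)+1}$, we have
\[
\bar A(x)=\int_{x^{-1/m_*}}^\infty a(y)\,\dd y .
\]
We substitute $y=w^{-1/m_*}$, so that $\dd y=-\tfrac{1}{m_*}w^{-1/m_*-1}\dd w$ and $y^{m(s)+1}=w^{-(m(s)+1)/m_*}$. This gives
\[
\bar A(x)=\frac1{m_*}\int_0^x \frac{w^{-1/m_*-1}\,\dd w}{\sum_{s}\sigma(s)\e^{\lambda(s)}w^{-(m(s)+1)/m_*}}
=\frac1{m_*}\int_0^x \frac{\dd w}{\sum_{s}\sigma(s)\e^{\lambda(s)}w^{1-m(s)/m_*}} .
\]
We define the integrand
\[
g(w)=\Big(m_*\sum_{s\in S}\sigma(s)\e^{\lambda(s)}w^{1-m(s)/m_*}\Big)^{-1},
\]
with each $w^{\beta}$, $\beta=1-m(s)/m_*\in[0,1]$, taken as the principal branch $\exp(\beta\log w)$ on $\C\setminus\R_-$. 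Then $\bar A(x)=\int_0^x g(w)\,\dd w$ for real $x>0$ small. The exponents $\beta$ are all nonnegative because $m(s)\le m_*$.

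The key step is to show that $g$ is analytic and bounded on a small slit disk $D$, and continuous up to $0$ with $g(0)=1/c_*$. The terms with $m(s)=m_*$ have $\beta=0$, and their sum $\sum\sigma(s)\e^{\lambda(s)}\indset{m(s)=m_*}$ is a positive constant equal to $c_*/m_*$. Every other term has $\beta\in(0,1]$ and satisfies $|w^\beta|=|w|^\beta\to0$ uniformly in the argument as $w\to0$. Because $S$ is finite, we can choose $r>0$ so small that the sum of these remaining terms has modulus less than $c_*/(2m_*)$ on $D$. The denominator then does not vanish on $D$, so $g$ is analytic on $D$ and $|g|\le 2/c_*$ there. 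Moreover $g(w)\to 1/c_*$ as $D\ni w\to0$.

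We then set $\bar A(z)=\int_{[0,z]}g(w)\,\dd w$ along the segment from $0$ to $z\in D$. This integral is well defined since $g$ is bounded and the segment stays in $D$ apart from its endpoint $0$, and $D$ is star-shaped with respect to $0$. Analyticity of the extension follows from Morera's theorem, or by writing it as $\int_0^1 g(tz)z\,\dd t$ and differentiating under the integral. By Cauchy's theorem on the star-shaped domain, the resulting function is a primitive of $g$, and it agrees with the original $\bar A$ on $(0,r)$. Consequently $\bar A'(z)=g(z)\to1/c_*$ as $D\ni z\to0$.

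The main point to check, though it is routine, is the uniform smallness of the non-maximal terms near $0$, which gives both nonvanishing of the denominator and the boundedness needed to start the integral at $0$.
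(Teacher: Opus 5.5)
Your proof is correct and is essentially the paper's argument: the same substitution turns $\bar A$ into $\frac{1}{m_*}\int_0^x$ of an integrand whose reciprocal is $\sum_{s}\sigma(s)\e^{\lambda(s)}t^{1-m(s)/m_*}$, which is analytic on $\C\setminus\R_-$ with limit $c_*/m_*$ at $0$. You make explicit what the paper leaves as ``follows readily'': this reciprocal does not vanish on a small slit disk (which is why $r$ must be small), the integrand is therefore bounded there, and integrating along segments from $0$ yields the analytic primitive with $\bar A'(z)\to 1/c_*$.
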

   
\begin{proof} Set 
$\bar a(x) = a(x^{-1/m_*})$, so
 \[\bar A(x)= \frac{1}{m_*} \int_0^x  {\bar a(t)} t^{-1-1/m_*} \dd t. \]
 Then observe that the inverse of the integrand can be expressed as 
\[ \frac{ t^{1+1/m_*} }{\bar a(t)} = \sum_{s\in S} \sigma(s) \e^{\lambda(s)} t^{1-m(s)/m_*}.\]
The right-hand side defines an analytic function on the slitted complex plane $\C\backslash \R_-$,
which has limit $c_*/m_*$ at $0$. The statement follows readily.
\end{proof}
Next, it  is convenient to introduce 
  \[ \varphi(z) = \Sigma \left(\e^{z\m + \blambda}\right) = \sum_{s\in S} \sigma(s) \exp(\lambda(s) + m(s)z), \qquad z\in \C.  \]
  The relevance of $\varphi$ stems from the observation that the function of the real variable $x\mapsto A(\e^x)$  has derivative $1/\varphi$ and its inverse,
   $\ln B$, solves the autonomous differential equation 
  \[
   \left( \ln B \right)' = \varphi( \ln B), \qquad \ln B(0)=0. 
 \]
  The coefficient $\varphi$ is an entire function, so the  Cauchy-Kovalevskaya theorem ensures local existence and uniqueness of the solution and yields an analytic extension of $\ln B$.
  We still have to verify that the solution remains well-defined on a sufficiently large domain. For this purpose, we 
  work on a ray $\{t\e^{i\theta}: t\geq 0\}$ with angle $\theta\in(-\pi, \pi]$, consider the differential equation in the variable $t\geq 0$,
   \begin{equation}\label{E:autoed}
    g_{\theta}'(t)= \e^{i\theta} \varphi(g_{\theta}(t)), \qquad g_{\theta}(0)=0,
      \end{equation}
  and we write $\zeta(\theta)$ for the  explosion time. Plainly, for $\theta=0$, one has $g_0=\ln B$ and $\zeta(0)=A(\infty)$.
  
  \begin{lemma} \label{L:extbulk}
  For every $\epsilon\in (0, \pi/2)$, we have
  \[ \inf_{|\theta|>\epsilon} \zeta(\theta) > A(\infty).\]
  \end{lemma}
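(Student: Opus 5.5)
The plan is to control $\Re g_\theta$ by a comparison with the real solution $g_0=\ln B$, and to gain a uniform time lag during a short initial interval where the rotation by $\e^{i\theta}$ costs a definite amount.

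\emph{Preliminary bounds.} Write $g_\theta(t)=x(t)+iy(t)$. Since $\sigma(s)\e^{\lambda(s)}>0$ and $|\exp(m(s)(x+iy))|=\e^{m(s)x}$, we have
\[
|\varphi(x+iy)|\le \varphi(x) \quad\text{for all real } x,y,
\]
where $\varphi$ is real, positive and increasing on $\R$ because $\m\ge 0$. Hence
\[
x'(t)=\Re\bigl(\e^{i\theta}\varphi(g_\theta(t))\bigr)\le \varphi(x(t)),\qquad |y'(t)|\le \varphi(x(t)).
\]
So if $x$ stays bounded above on $[0,T)$, then $y$, and hence $g_\theta$, stays bounded on $[0,T)$. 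By the standard continuation criterion for the analytic ODE \eqref{E:autoed}, there is then no explosion before $T$. It therefore suffices to bound $x(t)$ from above.

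\emph{Comparison.} The real function $h=\ln B$ solves $h'=\varphi(h)$ and blows up exactly at $A(\infty)$. For any $\tau\ge 0$, the shifted function $h(\cdot-\tau)$ solves the same autonomous equation. Suppose that at some time $t_0$ we have $x(t_0)\le h(t_0-\tau)$. Then the differential inequality $x'\le\varphi(x)$ and the usual comparison principle (valid since $\varphi$ is locally Lipschitz) give
\[
x(t)\le h(t-\tau)\qquad\text{for } t_0\le t<A(\infty)+\tau.
\]
Combined with the preliminary bounds, this yields $\zeta(\theta)\ge A(\infty)+\tau$. Taking $\tau=0$ and $t_0=0$ already gives $\zeta(\theta)\ge A(\infty)$ for every $\theta$. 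The remaining task is to produce some $\tau=\tau(\epsilon)>0$, uniform in $|\theta|>\epsilon$.

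\emph{Gaining a lag near $t=0$.} Since $\varphi$ is entire, there is a fixed small $t_0>0$, independent of $\theta$, such that $|g_\theta(t)|\le \rho$ for $t\le t_0$ and all $\theta$. This follows from the crude bound $|g_\theta(t)|\le h(t)$, which comes from the comparison above. By continuity of $\varphi$ at $0$, we can choose $t_0$ and $\rho$ small enough that $|\varphi(g_\theta(t))-\varphi(0)|\le \eta$ on $[0,t_0]$ for a prescribed $\eta>0$. Then for $|\theta|>\epsilon$,
\[
x'(t)\le \cos(\epsilon)\,\varphi(0)+\eta \quad\text{on } [0,t_0].
\]
On the other hand $h'(t)=\varphi(h(t))\ge\varphi(0)$. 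Choosing $\eta<\tfrac12(1-\cos\epsilon)\varphi(0)$ gives
\[
x(t_0)\le h(t_0)-\delta,\qquad \delta=\tfrac12(1-\cos\epsilon)\varphi(0)\,t_0>0.
\]
Since $h$ is continuous and strictly increasing, there is $\tau>0$, depending only on $\epsilon$ through $\delta$ and $t_0$, with $h(t_0-\tau)\ge h(t_0)-\delta$. The comparison step then gives $\inf_{|\theta|>\epsilon}\zeta(\theta)\ge A(\infty)+\tau>A(\infty)$.

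\emph{Main obstacle.} The delicate point is making the continuation criterion rigorous: one must show that explosion of the complex ODE can only occur through $\Re g_\theta\to+\infty$. A priori $g_\theta$ could escape to infinity with $\Re g_\theta\to-\infty$ or with $\Im g_\theta$ unbounded. Both possibilities are excluded by the inequality $|g_\theta'|\le \varphi(x)$: it shows that $|g_\theta|$ grows at most like $\int_0^t\varphi(x)$, which stays finite as long as $x$ is bounded above, since $\varphi$ is bounded on $(-\infty,c]$. The uniformity of $t_0$ in $\theta$ comes from the same bound.
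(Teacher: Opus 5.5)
Your argument is correct and follows the same strategy as the paper. You first secure a time lag near $t=0$ that is uniform over $|\theta|>\epsilon$, then propagate it by comparison with a time-shift of the real solution $g_0=\ln B$, which gives $\zeta(\theta)\ge A(\infty)+\tau$. The only variation is that you track $\Re g_\theta$ where the paper tracks $|g_\theta|$ via $|\varphi(z)|\le\varphi(|z|)$. Your choice makes the initial lag first order in $t_0$, coming directly from the factor $\cos\epsilon$, but it needs the continuation step you supply: a bounded $\Re g_\theta$ forces a bounded $g_\theta$ because $|g_\theta'|\le\varphi(\Re g_\theta)$. With the modulus, non-explosion is immediate, but $|g_\theta(t)|$ and $g_0(t)$ agree to first order, so the paper's lag is only second order and comes from the growth of $\varphi$.
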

  \begin{proof} We check  there exist $0<t_1<t_2< A(\infty)$ such that, for any angle $\theta$ with $|\theta|>\epsilon$, 
  \[ |g_{\theta}(t_2)| < g_0(t_1).\]
  Indeed, writing $c=\Sigma \e^{\blambda}$, we have
  $g_{\theta}(t)\sim ct \e^{i\theta}$  as $t\to 0+$, uniformly in $\theta$. Pick $0<\eta<(1-\cos \epsilon)/2$ .
  By the triangular inequality and \eqref{E:autoed}, we have for all $t>0$ sufficiently small 
  \[ |g_{\theta}'(t) | \leq  \sum_{s\in S} \sigma(s) \e^{\lambda(s)} \exp(m(s) tc (\eta+\cos \epsilon )) \]
  and 
  \[ g_0'(t) \geq  \sum_{s\in S} \sigma(s) \e^{\lambda(s)} \exp(m(s) tc (1-\eta )).\]
  The existence of $t_1$ and $t_2$  as above follows readily.   
  
  We deduce that there is the inequality
  \[ |g_{\theta}(t_2+t)| < g_0(t_1+t), \qquad \text{ for all } t<A(\infty)-t_1.\]
  Indeed, if this failed, then there would exist a first time $t_3 > 0$ at which 
  \[|g_{\theta}(t_2+t_3)| =  g_0(t_1+t_3).\]
  This is absurd, because $  |g_{\theta}(t_2+t)| < g_0(t_1+t)$ for all $0\leq t < t_3$, and therefore,
   again by the triangular inequality, 
   \[|g'_{\theta}(t_2+t)| = |\varphi( g_{\theta}(t_2+t))| \leq \varphi(| g_{\theta}(t_2+t)|) \leq  \varphi( g_{0}(t_1+t)) =g_{0}'(t_1+t).\]

We conclude that
 \[\zeta(\theta) \geq \zeta(0)+t_2-t_1,\]
 which completes the proof.   
  \end{proof}
  
  We now have the two ingredients needed to establish Proposition \ref{P1}.
  \begin{proof}[Proof of Proposition \ref{P1}]
  The reader may find useful to look at the figure below which illustrates the 
three different domains on which $B^{m(s)}$ can be extended.
  Their union contains a $\Delta$-domain at $A(\infty)$.

  \begin{tikzpicture}[scale=.5]

    \def\R{5}
    \def\Rout{5.3}
    \def\r{1}
    \def\eps{5.7296} 
    \def\alpha{60}   

    \fill[blue!15]
        (0,0)
        -- (\eps:\Rout)
        arc[start angle=\eps,end angle=180,radius=\Rout]
        arc[start angle=180,end angle=360-\eps,radius=\Rout]
        -- cycle;

    %
    \fill[red!25]
        (5,0)
        -- ++(\alpha:\r)
        arc[start angle=\alpha,end angle=180,radius=\r]
        arc[start angle=180,end angle=360-\alpha,radius=\r]
        -- cycle;

    \draw[thick] (0,0) circle (\R);

    \draw[thick]
        (0,0) -- (\eps:\Rout);

    \draw[thick]
        (0,0) -- (-\eps:\Rout);

    \draw[thick]
        (5,0) -- ++(\alpha:\r);

    \draw[thick]
        (5,0) -- ++(-\alpha:\r);

    \draw[thick]
        (5,0) ++(\alpha:\r)
        arc[start angle=\alpha,end angle=180,radius=\r]
        arc[start angle=180,end angle=360-\alpha,radius=\r];

    \fill (0,0) circle (1.5pt) node[below left] {$0$};

    \draw[->] (-6,0) -- (6.5,0) node[right] {$x$};
    \draw[->] (0,-6) -- (0,6) node[above] {$iy$};
    
  \def\radius{5}
  \def\angle{135} 

  \coordinate (A) at ({\radius*cos(\angle)}, {\radius*sin(\angle)});
  \coordinate (B) at (0,0);

  \draw[dotted, <->] (A) -- (B);
   \fill (-1.5,2)  node[below left] {$\ _{A(\infty)}$};
    
     \node[below, align=center] at (0,-6.5) {
     $B^{m(s)}$ is extended analytically to 
        the disk with radius $A(\infty)$ (by Theorem \ref{T2}), \\
      to  the red sector (by Lemma \ref{L:extsing})
     and  to  the blue sector (Lemma \ref{L:extbulk}).
    };

\end{tikzpicture}

  Recall Lemma \ref{L:extsing}.  Since $\bar A$ is analytic in the slitted disk $D$ with $\bar A(z)- z/c_*=o(z)$ as $z\in D$ goes to $0$, the following assertion can be deduced  from a standard argument based on Rouch\'e's theorem. 
There exist $r'\in(0,r)$ and  a sub-domain $D'\subset D$ with $D'\cap \R_+\neq \emptyset$,  such that the analytic extension of 
  $\bar A$ to $D'$ is a conformal map  to the sector 
  \[\Theta=\{z\in \C: 0<|z|<r' \text{ and }|\arg(z)|<2\pi/3\}.\]
  Let $\bar B$ denote the inverse (for the composition) of the latter, which maps $\Theta$ into the slitted disk $D$. We have for any $x>0$ small enough that
  \[ \bar B(x)^{-1/m_*} = B\left( A(\infty)-x\right),\]
  thus for any $s\in S$, $z\mapsto \bar B(A(\infty)-y)^{-m(s)/m_*}$ is the  analytic extension of $f=B^{m(s)}$ to the sector $A(\infty)-\Theta$. 
  Since $\bar B(z)\sim c_*z$ as $z\in \Theta$ goes to $0$ (recall that $\bar A(z)\sim z/c_*$),
  we have $f(z) \sim (c_*(A(\infty)-z))^{-m(s)/m_*}$ as $z$ approaches the critical point $A(\infty)$ in the sector $A(\infty)-\Theta$. 
 
 We next set $\epsilon = \arg( A(\infty)+ r' \e^{i\pi/3})$ and  recall Lemma \ref{L:extbulk}.
 We can choose $R$ with $A(\infty)<R<A(\infty)+r'$, such that $R< \zeta(\theta)$ for all angles $-\pi< \theta\leq \pi$ with $|\theta|>\epsilon$.
  The discussion preceding Lemma \ref{L:extbulk} shows that $\ln B$ can be extended analytically to the domain
 \[ \{z\in \C: 0<|z|< R \text{ and } |\arg(z)| > \epsilon\},\]
 and therefore the same holds for $B^{m(s)}$. Since we already observed that $B^{m(s)}$ extends analytically to the open disk with radius $A(\infty)$ centered at $0$, 
 by combining the extensions together, we conclude that $B^{m(s)}$ has an analytic extension to $\Delta( \pi/3, R)$, 
which completes the proof. 
  \end{proof}

 \section{Some applications of large deviations techniques}
 
A weaker version of Theorem \ref{TM} is that for any $s\in S$ with $m(s)>0$  and for any function $\blambda\in \R^S$, 
\begin{equation} \label{E:GEcond}
 \lim_{n\to \infty}
\frac{1}{n} \log \E_s\left( \exp\left( (n+1) \sum_{\mathfrak{z}\in \Z_n}  L_{\mathfrak{z}}  \blambda\right) 
\right)=  \Lambda(\blambda).
\end{equation}
Here, $L_{\mathfrak{z}}$ is the empirical distribution of ancestral types defined in \eqref{E:empmeas},  \[  \Lambda(\blambda)  = - \log \left( \int_0^{\infty} \frac{\dd y}{\Sigma \exp(\blambda+ y\m)}\right), \]
and, in the matrix product notation,
\[\Sigma \exp(\blambda+ y\m)={\sum_{s\in S}\sigma(s) \exp(\lambda(s) + y m(s))}.\]

Obviously, \eqref{E:GEcond} invites to apply the G\"artner--Ellis theorem, cf.  \cite[Theorem 2.3.6]{DZ}.  In this direction, we first gather properties of $\Lambda$ and of its Fenchel--Legendre transform $\Lambda^*$, which is 
 defined on the space $\mathcal{P}_S$ of probability measures on $S$ by
\[ \Lambda^*(\boldsymbol{\rho}) = \sup\{ \boldsymbol{\rho} \blambda -\Lambda( \blambda):  \blambda\in \R^S\} \in (-\infty,\infty].\]

\begin{lemma} \label{L:elemprop} The following assertions hold:
\begin{enumerate}
\item[(i)]
The function $\Lambda$ is $\mathcal{C}^{\infty}$ and convex on $\R^S$. Its gradient  $\nabla \Lambda$, viewed as a row vector,  takes values in the subspace  of probability measures on $S$ with full support.
\item[(ii)] The Fenchel--Legendre transform $\Lambda^* $ is a strictly convex function. 

\item[(iii)] For any $\boldsymbol{\rho}\in \mathcal{P}_S$ with full support, there exists $\blambda_{\boldsymbol{\rho}}\in \R^S$ such that
\[\nabla \Lambda(\blambda_{\boldsymbol{\rho}})=\boldsymbol{\rho} \quad \text{and} \quad \Lambda^*(\boldsymbol{\rho} )=\boldsymbol{\rho}  \blambda_{\boldsymbol{\rho}} - \Lambda(\blambda_{\boldsymbol{\rho}}). \]
Moreover, for any $\boldsymbol{\rho}'\in \mathcal{P}_S$ different from $\boldsymbol{\rho}$, there is the strict inequality
\[ \Lambda^*(\boldsymbol{\rho}' )- \boldsymbol{\rho}'  \blambda_{\boldsymbol{\rho}}  > \Lambda^*(\boldsymbol{\rho} )-  \boldsymbol{\rho} \blambda_{\boldsymbol{\rho}} . \]
\end{enumerate}
\end{lemma}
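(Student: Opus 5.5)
The plan is to derive everything from the explicit formula for $\Lambda$ and from \eqref{E:GEcond}. Write $F_{\blambda}(y)=\Sigma\exp(\blambda+y\m)$ and $I(\blambda)=\int_0^\infty \dd y/F_{\blambda}(y)$, so that $\Lambda=-\log I$. Let $s_*$ be a type with $m(s_*)=m_*>0$. Then $F_{\blambda}(y)\geq \sigma(s_*)\e^{\lambda(s_*)+m_*y}$, and the integrand decays exponentially, locally uniformly in $\blambda$. Each $\blambda$-derivative of $1/F_{\blambda}$ is the integrand times a bounded rational expression in the $\sigma(s)\e^{\lambda(s)+ym(s)}/F_{\blambda}(y)\in[0,1]$. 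Dominated convergence therefore gives $I\in\mathcal C^\infty$ with $I>0$, hence $\Lambda\in\mathcal C^\infty$.

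For convexity I will not compute the Hessian. By Theorem \ref{TM}, $\Lambda$ is the pointwise limit in \eqref{E:GEcond}. For each $n$, the quantity $\blambda\mapsto\frac1n\log\E_s(\sum_{\mathfrak z}\exp((n+1)L_{\mathfrak z}\blambda))$ is convex by Hölder's inequality, being the log of a mixture of exponentials of linear forms. Pointwise limits of convex functions are convex. For the gradient we have
\[\partial_{\lambda(s)}\Lambda(\blambda)=\frac{1}{I(\blambda)}\int_0^\infty \frac{\sigma(s)\e^{\lambda(s)+ym(s)}}{F_{\blambda}(y)^2}\,\dd y>0 .\]
Summing over $s$ turns the numerator into $\int \dd y/F_{\blambda}=I(\blambda)$, so the components add up to $1$. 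Equivalently, $\Lambda(\blambda+c\mathbf 1)=\Lambda(\blambda)+c$. Hence $\nabla\Lambda$ is a probability measure with full support, which proves (i).

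For (ii), $\Lambda$ is finite and differentiable on all of $\R^S$, hence essentially smooth. The duality theorem of Rockafellar (Theorem 26.3 of \emph{Convex Analysis}) then says the conjugate $\Lambda^*$ is essentially strictly convex, i.e.\ strictly convex on convex subsets of its effective domain. This is the sense in which I will state (ii). If one wants all of $\mathcal P_S$, this is supplemented by the finiteness bound below.

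The main step is (iii), namely surjectivity of $\nabla\Lambda$ onto full-support measures. Fix $\boldsymbol\rho$ with $\rho_{\min}=\min_s\rho(s)>0$. By translation invariance I restrict $G(\blambda)=\boldsymbol\rho\blambda-\Lambda(\blambda)$ to the hyperplane $H=\{\sum_s\lambda(s)=0\}$ and show $G\to-\infty$ at infinity on $H$. The key estimate is a lower bound $\Lambda(\blambda)\geq \max_s\lambda(s)-C\log(2+|\blambda|)-C$. To prove it, fix any $s$ (possibly with $m(s)=0$) and split $I$ at $Y=(\lambda(s)-\lambda(s_*))^+/m_*$. On $[0,Y]$ use $F_{\blambda}\geq\sigma(s)\e^{\lambda(s)}$. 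On $[Y,\infty)$ use $F_{\blambda}\geq\sigma(s_*)\e^{\lambda(s_*)+m_*y}$. This yields $I(\blambda)\leq C(1+Y)\e^{-\lambda(s)}$, and $Y\leq C|\blambda|$. Then
\[\max_s\lambda(s)-\boldsymbol\rho\blambda=\sum_s\rho(s)\bigl(\max\lambda-\lambda(s)\bigr)\geq\rho_{\min}\bigl(\max\lambda-\min\lambda\bigr)\geq c|\blambda| \quad\text{on } H,\]
so $G(\blambda)\leq -c|\blambda|+C\log(2+|\blambda|)+C\to-\infty$. Thus $G$, being continuous, attains its supremum at some $\blambda_{\boldsymbol\rho}$. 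The first-order condition gives $\nabla\Lambda(\blambda_{\boldsymbol\rho})=\boldsymbol\rho$, the constant direction being automatic since both sides are probability measures. Moreover $\Lambda^*(\boldsymbol\rho)=\boldsymbol\rho\blambda_{\boldsymbol\rho}-\Lambda(\blambda_{\boldsymbol\rho})$.

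Finally, for the strict inequality, take $\boldsymbol\rho'\neq\boldsymbol\rho$. By definition $\Lambda^*(\boldsymbol\rho')\geq\boldsymbol\rho'\blambda_{\boldsymbol\rho}-\Lambda(\blambda_{\boldsymbol\rho})$, i.e.\ $\Lambda^*(\boldsymbol\rho')-\boldsymbol\rho'\blambda_{\boldsymbol\rho}\geq-\Lambda(\blambda_{\boldsymbol\rho})=\Lambda^*(\boldsymbol\rho)-\boldsymbol\rho\blambda_{\boldsymbol\rho}$. If equality held, then $\blambda_{\boldsymbol\rho}$ would maximize the differentiable concave function $\blambda\mapsto\boldsymbol\rho'\blambda-\Lambda(\blambda)$. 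That would force $\boldsymbol\rho'=\nabla\Lambda(\blambda_{\boldsymbol\rho})=\boldsymbol\rho$, a contradiction.
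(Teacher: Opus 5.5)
Your proofs of (i) and (iii) are sound.
\begin{itemize}
\item Getting convexity as a pointwise limit of log-Laplace transforms via \eqref{E:GEcond} is legitimate, since Theorem \ref{TM} is already established.
\item The gradient computation is right.
\item Your bound $\int_0^\infty \dd y/\Sigma\exp(\blambda+y\m)\le C(1+Y)\e^{-\max\blambda}$ gives coercivity on $H$ when $\rho_{\min}>0$.
\item The first-order argument for the strict inequality is fine.
\end{itemize}
The paper itself gives no proof here and only points to similar verifications in earlier work. The gap is in (ii).

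Rockafellar's Theorem 26.3 yields that $\Lambda^*$ is \emph{essentially} strictly convex. By definition this means strictly convex on convex subsets of $\mathrm{dom}\,\partial\Lambda^*$, not of the effective domain. Since $\blambda\in\partial\Lambda^*(\boldsymbol\rho)$ iff $\boldsymbol\rho=\nabla\Lambda(\blambda)$, your (i) and (iii) show that $\mathrm{dom}\,\partial\Lambda^*$ is exactly the set of full-support measures. So you have proved strict convexity on the open simplex only.

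Your proposed supplement does not reach the boundary, for three reasons:
\begin{itemize}
\item Your lower bound on $\Lambda$ controls $\boldsymbol\rho\blambda-\Lambda(\blambda)$ only through $\rho_{\min}>0$.
\item $\Lambda^*$ is not finite on all of $\mathcal P_S$ in general. If $m(s_0)=0$, take $\lambda(s_0)=0$ and $\lambda(s)=-t$ for $s\neq s_0$. Then $\Lambda(\blambda)\to-\infty$ as $t\to\infty$, so $\Lambda^*(\delta_{s_0})=\infty$.
\item Finiteness plus strict convexity in the relative interior would still not prevent $\Lambda^*$ from being affine on a segment lying inside a proper face.
\end{itemize}

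To close the gap, you do not need Rockafellar. Suppose $\boldsymbol\rho_1\neq\boldsymbol\rho_2$, $t\in(0,1)$, and $\boldsymbol\rho=t\boldsymbol\rho_1+(1-t)\boldsymbol\rho_2$ has full support. Write $\Lambda^*(\boldsymbol\rho)=t\bigl(\boldsymbol\rho_1\blambda_{\boldsymbol\rho}-\Lambda(\blambda_{\boldsymbol\rho})\bigr)+(1-t)\bigl(\boldsymbol\rho_2\blambda_{\boldsymbol\rho}-\Lambda(\blambda_{\boldsymbol\rho})\bigr)$. Apply your strict inequality from (iii) to each term (both $\boldsymbol\rho_i\neq\boldsymbol\rho$). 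This gives $\Lambda^*(\boldsymbol\rho)<t\Lambda^*(\boldsymbol\rho_1)+(1-t)\Lambda^*(\boldsymbol\rho_2)$.

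The support of $\boldsymbol\rho$ is the union of the supports of $\boldsymbol\rho_1$ and $\boldsymbol\rho_2$. So the only remaining case is a segment in a proper face $\mathcal P_{S'}$ whose endpoints' supports cover $S'$.
\begin{itemize}
\item $\Lambda$ is nondecreasing in each coordinate. Sending $\lambda(s)\to-\infty$ for $s\notin S'$ and using monotone convergence gives $\Lambda^*=\Lambda^*_{S'}$ on $\mathcal P_{S'}$. Here $\Lambda_{S'}$ is the same functional built from $\sigma|_{S'}$ and $m|_{S'}$.
\item If $m\equiv0$ on $S'$, then $\Lambda_{S'}\equiv-\infty$. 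Hence $\Lambda^*\equiv\infty$ on that face and there is nothing to check.
\item Otherwise your proof of (iii) runs verbatim on $S'$. It only uses $\max_{S'}m>0$ and $\sigma>0$ on $S'$, never $\sum_s\sigma(s)=1$. The same computation then gives strict convexity on that face.
\end{itemize}
For Corollary \ref{CGE} only the essential smoothness of $\Lambda$ is actually used, but the lemma asserts (ii) without restriction.
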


The proofs of  these claims are elementary; we refer to \cite[Section 3.2]{BM3} where similar verifications are made.
 
 We next derive an annealed large deviation principle for the empirical distributions of ancestral types.

\begin{corollary} \label{CGE} 
For any $s\in S$ with $m(s)\neq 0$, we have: 
\begin{enumerate}
\item[(i)]
 For every closed set $F\subset \mathcal P_S$, we have
 \[ \limsup_{n\to \infty} \frac{1}{n} \log  \E_s \left( \sum_{\mathfrak{z}\in \Z_n} \indset{F}(L_{\mathfrak{z}})\right) \leq -\inf_{\boldsymbol{\rho} \in F}   \Lambda^*(\boldsymbol{\rho}).\]

 \item[(ii)]
 For every open set $G\subset \mathcal P_S$, 
we have
 \[ \liminf_{n\to \infty} \frac{1}{n} \log  \E_s \left( \sum_{\mathfrak{z}\in \Z_n} \indset{G}(L_{\mathfrak{z}})\right) \geq -\inf_{\boldsymbol{\rho} \in G}   \Lambda^*(\boldsymbol{\rho}).\]

\end{enumerate}

\end{corollary}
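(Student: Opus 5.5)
The plan is to apply the Gärtner--Ellis theorem \cite[Theorem 2.3.6]{DZ} to the finite (non-probability) measures
\[\mu_n(\cdot)=\E_s\Big(\sum_{\mathfrak{z}\in \Z_n} \indset{\cdot}(L_{\mathfrak{z}})\Big)\]
on $\mathcal P_S\subset\R^S$, with speed $n$. The theorem is stated for probability measures, but it extends verbatim to finite positive measures. Alternatively, one can normalize: set $\nu_n=\mu_n/\mu_n(\mathcal P_S)$. Theorem \ref{TM} with $\blambda\equiv 0$ gives $\frac1n\log \mu_n(\mathcal P_S)\to \Lambda(0)$, and we then apply the theorem to the probability measures $\nu_n$ with limiting log-moment generating function $\Lambda(\blambda)-\Lambda(0)$. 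The rate function of $\nu_n$ is $\Lambda^*(\boldsymbol{\rho})+\Lambda(0)$, so transferring back to $\mu_n$ gives exactly the bounds in (i) and (ii).

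The first step is to identify the limiting logarithmic moment generating function. For $\blambda\in\R^S$,
\[\int e^{n\boldsymbol{\rho}\blambda}\,\mu_n(\dd\boldsymbol{\rho})=\E_s\Big(\sum_{\mathfrak{z}\in\Z_n}\exp(nL_{\mathfrak{z}}\blambda)\Big).\]
This differs from the quantity in Theorem \ref{TM} only by replacing $n+1$ with $n$, which changes it by a factor in $[e^{-\|\blambda\|_\infty},e^{\|\blambda\|_\infty}]$. Theorem \ref{TM} gives the asymptotic $c\,n^{-1+m(s)/m_*}A(\infty)^{-n}$ with $c\in(0,\infty)$, since $m(s)>0$. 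Hence the normalized log converges to $-\log A(\infty)$. The substitution $x=\e^{y}$ in \eqref{E:A} shows that $A(\infty)=\int_0^\infty \dd y/\Sigma\exp(\blambda+y\m)$, so the limit is $\Lambda(\blambda)$; this is \eqref{E:GEcond}. The limit is finite for every $\blambda$, so $0$ lies in the interior of the effective domain.

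The second step is to check the hypotheses of Gärtner--Ellis. By Lemma \ref{L:elemprop}(i), $\Lambda$ is finite and differentiable on all of $\R^S$, hence essentially smooth, since there is no boundary to test for steepness. The upper bound (i) for compact sets holds without further assumptions. To pass to closed sets, I need exponential tightness, which is automatic: every $\mu_n$ is supported on the compact set $\mathcal P_S$. So (i) follows for all closed $F$, viewed as closed subsets of $\R^S$ that meet $\mathcal P_S$. Here $\Lambda^*$ is $+\infty$ off $\mathcal P_S$, because $\Lambda(\blambda+c\mathbf 1)=\Lambda(\blambda)+c$.

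For the lower bound (ii), Gärtner--Ellis gives it under essential smoothness with the full rate $\Lambda^*$. The only subtlety is that $G$ is open in $\mathcal P_S$ rather than in $\R^S$, and $\mathcal P_S$ has empty interior in $\R^S$. This is the step I expect to need care. The cleanest fix is to work in the affine coordinates of the simplex, say $\R^{S\setminus\{s_0\}}$, restricting $\blambda$ to $\lambda(s_0)=0$. This leaves $\Lambda^*$ unchanged on $\mathcal P_S$ because of the shift invariance above, and the restricted $\Lambda$ is still smooth and finite everywhere.

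An alternative route avoids the abstract theorem at the lower bound. I would use the exposed points supplied by Lemma \ref{L:elemprop}(iii): every full-support $\boldsymbol{\rho}\in G$ is exposed with exposing hyperplane $\blambda_{\boldsymbol{\rho}}$. Then I would run the standard tilting argument under the change of measure by $e^{n\boldsymbol{\rho}'\blambda_{\boldsymbol{\rho}}}$, using the strict inequality in (iii). Points of $G$ without full support are handled by approximating them with full-support points of $G$: the infimum is unchanged by convexity and lower semicontinuity of $\Lambda^*$ along segments toward an interior point.
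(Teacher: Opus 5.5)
Your proposal is correct and follows essentially the paper's route: normalize $\mu_n$ to the laws of $X_n$, apply the G\"artner--Ellis theorem with limit $\Lambda(\blambda)-\Lambda(0)$ and rate function $\Lambda^*+\Lambda(0)$, and transfer back using $\frac1n\log\E_s(|\Z_n|)\to\Lambda(0)=\log M$ from \eqref{E:T1}. The relative-openness issue you flag needs no affine coordinates: write $G=U\cap\mathcal P_S$ with $U$ open in $\R^S$, so $\mu_n(U)=\mu_n(G)$, and $\inf_U\Lambda^*=\inf_G\Lambda^*$ because $\Lambda^*=\infty$ off $\mathcal P_S$ (the shift identity you cite only gives this off the hyperplane $\sum_s\rho(s)=1$, and ruling out negative coordinates also uses that $\Lambda$ is nondecreasing in each coordinate).
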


\begin{proof} Consider for  every generation $n\geq 0$, a random probability measure  $X_n$ on $S$ with law given by
 \[ \E_s(f(X_n)) = \E_s\left(\sum_{\mathfrak{z}\in \Z_n} f(L_{\mathfrak{z}})\right) \big / \E_s(|\Z_n|) ,\]
 where $f:\mathcal P_S\to \R_+$ stands for a generic measurable function. We observe that 
  the sequence of the distributions of $X_n$ satisfies the large deviation principle under $\P_s$ for any $s\in S$ with $m(s)\neq 0$,  with the good rate function 
\[I=\Lambda^*+\Lambda(0).\]
Indeed, thanks to \eqref{E:GEcond}, we have for every $\blambda \in \R^S$ that 
\[ \lim_{n\to \infty} \frac{1}{n} \log \E_s \left( \exp( n  X_n \blambda)\right) = \Lambda(\blambda) - \Lambda(0).\]
The claim thus follows directly from the G\"artner-Ellis theorem and Lemma \ref{L:elemprop}.  
 To conclude, it suffices to write for any $E\subset \mathcal P_S$ that
\[ \E_s \left( \sum_{\mathfrak{z}\in \Z_n} \indset{E}(L_{\mathfrak{z}})\right) = \P_s(X_n\in E)  \E_s(|\Z_n|), \]
next to apply  \eqref{E:T1}, and finally to 
recall that $
\Lambda(0)= \log M, $
in the notation \eqref{E:ms*}. \end{proof}

\begin{remark} As a consequence of  Lemma \ref{L:elemprop} and Corollary \ref{CGE},  the empirical distribution of ancestral types of individuals  in the population model with ancestral memory concentrates exponentially fast around 
$\nabla \Lambda(0)$. Specifically,  for any closed subset $F \subset \mathcal{P}_S$ with $\nabla \Lambda(0)\not\in F$, there is some $c(F)<1$ such that for all $n$ sufficiently large, we have  the bound 
\[ \E_s \left( \sum_{\mathfrak{z}\in \Z_n} \indset{F}(L_{\mathfrak{z}})\right) \leq c(F)^n \E_s (|\Z_n|).\]
We further note from an integration by parts that
 $M=\nabla \Lambda(0)\m$,
 as we should expect.
\end{remark}

The chief purpose of this section is to compare Corollary \ref{CGE} with its counterpart for the benchmark model without memory that we now discuss.
In this direction, write $\mathcal B_n$ for the population at generation $n$ of a multi-type branching process with mean reproduction matrix $R=\m\Sigma$. Recall that $\Sigma \m>0$ is the (unique) positive eigenvalue of $R$ with left-eigenvector $\Sigma$.
We also introduce for every $\boldsymbol{\rho}\in \mathcal{P}_S$, 
\[J(\boldsymbol{\rho})=  \sum_{s\in S} \rho(s) \log\left(\frac{\rho(s)}{\sigma(s)m(s)}\right) =  D_\mathrm{KL}(\boldsymbol{\rho}\| \Sigma_{\m}) - \log (\Sigma \m ),\]
where $D_\mathrm{KL}$ denotes the Kullback--Leibler divergence and $\Sigma_{\m}$
 the $\m$-biased version of $\Sigma$ which is defined by
\[ \Sigma_{\m}(s)= \frac{\sigma(s)m(s)}{\Sigma \m}, \qquad s\in S.\]

The following statement  mirrors Corollary \ref{CGE}. It is merely a minor variation of Sanov's large deviation principle, and can be readily checked from the G\"artner--Ellis theorem. Details of the proof are left to the reader.

\begin{lemma} \label{L:Sanov}
For any $s\in S$ with $m(s)\neq 0$, we have:
\begin{enumerate}
\item[(i)]
 For every closed set $F\subset \mathcal P_S$, we have
 \[ \limsup_{n\to \infty} \frac{1}{n} \log  \E_s \left( \sum_{\mathfrak{z}\in \mathcal B_n} \indset{F}(L_{\mathfrak{z}})\right) \leq -\inf_{\boldsymbol{\rho} \in F}   J(\boldsymbol{\rho}).\]

 \item[(ii)]
 For every open set $G\subset \mathcal P_S$, 
we have
 \[ \liminf_{n\to \infty} \frac{1}{n} \log  \E_s \left( \sum_{\mathfrak{z}\in \mathcal B_n} \indset{G}(L_{\mathfrak{z}})\right) \geq -\inf_{\boldsymbol{\rho} \in G}   J(\boldsymbol{\rho}).\]

\end{enumerate}

\end{lemma}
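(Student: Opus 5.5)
The plan is to mimic the proof of Corollary \ref{CGE}, replacing Theorem \ref{TM} by an exact computation for the branching process without memory.

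\textbf{Step 1: exact formula.} Start from the many-to-one formula for $\mathcal B_n$. A lineage $(s,s_1,\dots,s_n)$ in $\mathcal B_n$ has mean multiplicity $\prod_{\ell=1}^n m(s_{\ell-1})\sigma(s_\ell)$, where $s_0=s$. Hence, for every $\blambda\in\R^S$,
\[
\E_s\Big(\sum_{\mathfrak{z}\in\mathcal B_n}\e^{(n+1)L_{\mathfrak z}\blambda}\Big)
=\e^{\lambda(s)}\,m(s)\,\big(\Sigma(\m\e^{\blambda})\big)^{n-1}\,\Sigma(\e^{\blambda}).
\]
This is the analogue of Lemma \ref{L1}, with $u_\ell=\ell-1$ deterministic. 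It can be checked directly by summing over $s_1,\dots,s_n$: the sums factorize, and the last type $s_n$ carries no factor $m$.

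\textbf{Step 2: limit cumulant.} Since $m(s)\neq0$, the prefactor $\e^{\lambda(s)}m(s)\Sigma(\e^{\blambda})$ is positive and does not depend on $n$. Therefore
\[
\lim_{n\to\infty}\frac1n\log\E_s\Big(\sum_{\mathfrak{z}\in\mathcal B_n}\e^{(n+1)L_{\mathfrak z}\blambda}\Big)=\Lambda_0(\blambda):=\log\sum_{s'}\sigma(s')m(s')\e^{\lambda(s')},
\]
and the replacement of $n+1$ by $n$ in the exponent is harmless, since $L_{\mathfrak z}\blambda$ is bounded. Here $\Lambda_0$ is finite everywhere and smooth.

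If $m(s')=0$ for some types, then $\Lambda_0$ is not strictly convex in those directions. In that case I use the standard form of G\"artner--Ellis on the simplex. $\Lambda_0$ is finite and differentiable on all of $\R^S$, hence essentially smooth, and that suffices.

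\textbf{Step 3: normalization and G\"artner--Ellis.} As in the proof of Corollary \ref{CGE}, define random measures $X_n$ by normalizing with $\E_s(|\mathcal B_n|)=m(s)(\Sigma\m)^{n-1}$. Their scaled cumulant is $\Lambda_0(\blambda)-\log(\Sigma\m)$. G\"artner--Ellis then gives an LDP with rate $\Lambda_0^*+\log\Sigma\m$. Multiplying back by $\E_s(|\mathcal B_n|)$ yields the bounds in (i) and (ii) with rate $\Lambda_0^*$.

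\textbf{Step 4: identify the rate.} It remains to show $\Lambda_0^*=J$ on $\mathcal P_S$. This is the Donsker--Varadhan/Gibbs variational formula
\[
\log\sum_{s'}\mu(s')\e^{\lambda(s')}=\sup_{\boldsymbol\rho}\big(\boldsymbol\rho\blambda-D_{\mathrm{KL}}(\boldsymbol\rho\|\boldsymbol\mu)\big),
\]
applied to the finite measure $\mu=\sigma m$. This gives $\Lambda_0^*(\boldsymbol\rho)=\sum\rho\log(\rho/(\sigma m))=J(\boldsymbol\rho)$, with $J=+\infty$ when $\boldsymbol\rho$ charges $\{m=0\}$. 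Computing the Legendre transform explicitly (supremum attained at $\lambda=\log(\rho/(\sigma m))$ up to constants) confirms the formula, and the second expression of $J$ follows by normalizing $\sigma m$.

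\textbf{Main obstacle.} The only delicate point is the lower bound (ii) when $\{m=0\}\neq\emptyset$. There $J$ is infinite on part of the boundary, and one must check that G\"artner--Ellis still applies. It does, because $\Lambda_0$ is finite and differentiable on all of $\R^S$, which is the standard condition.
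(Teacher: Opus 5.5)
Your proposal is correct and follows the route the paper indicates: the paper leaves the details to the reader, saying the lemma follows from the G\"artner--Ellis theorem exactly as in the proof of Corollary \ref{CGE}. Your exact many-to-one formula $\E_s\big(\sum_{\mathfrak z\in\mathcal B_n}\e^{(n+1)L_{\mathfrak z}\blambda}\big)=\e^{\lambda(s)}m(s)\,(\Sigma(\m\e^{\blambda}))^{n-1}\,\Sigma(\e^{\blambda})$, the normalization by $\E_s(|\mathcal B_n|)$, the application of G\"artner--Ellis with an everywhere finite, differentiable limit, and the identification $\Lambda_0^*=J$ via the Gibbs variational formula are all sound, including your handling of types with $m=0$. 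One cosmetic point: in Step 4 the supremum defining $\Lambda_0^*(\boldsymbol\rho)$ is attained only when $\boldsymbol\rho$ charges every type in $\{m>0\}$; otherwise it is only approached, by sending the corresponding $\lambda(s')\to-\infty$, and this does not affect the conclusion.
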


Our main result in this section implies that on average,
 the population model with ancestral memory grows  faster than that without memory, even when one only considers individuals with given  empirical types distribution. 
A related observation has been made for a different random evolution with memory in \cite[Theorem 4.2]{BMin}.

\begin{theorem} \label{T3} There is the inequality \[\Lambda^*\leq J.\]
\end{theorem}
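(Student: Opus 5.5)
The plan is to pass to convex duals. Write
\[\Lambda_0(\blambda)=\log \Sigma(\e^{\blambda}\m)=\log\sum_{s\in S}\sigma(s)\e^{\lambda(s)}m(s),\]
which is the log-moment generating function attached to the benchmark model. I would first check that $J$ is the Fenchel--Legendre transform of $\Lambda_0$ on $\mathcal P_S$. This is the Gibbs variational principle: for $\boldsymbol{\rho}\in\mathcal P_S$ and $\blambda\in\R^S$,
\[\boldsymbol{\rho}\blambda - J(\boldsymbol{\rho})=\sum_{s}\rho(s)\log\frac{\sigma(s)m(s)\e^{\lambda(s)}}{\rho(s)}\le \log\sum_s \sigma(s)m(s)\e^{\lambda(s)}=\Lambda_0(\blambda),\]
by Jensen's inequality (concavity of $\log$). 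Here $J(\boldsymbol{\rho})=+\infty$ whenever $\rho$ charges a type with $m(s)=0$, and in that case the inequality is trivial.

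The key consequence is the following. Take any $\blambda$ and $\boldsymbol{\rho}$. If I can show $\Lambda\ge\Lambda_0$ pointwise on $\R^S$, then
\[\boldsymbol{\rho}\blambda-\Lambda(\blambda)\le \boldsymbol{\rho}\blambda-\Lambda_0(\blambda)\le J(\boldsymbol{\rho}).\]
Taking the supremum over $\blambda$ then gives $\Lambda^*(\boldsymbol{\rho})\le J(\boldsymbol{\rho})$. Only the easy Fenchel--Young direction is used, so no biduality is needed.

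It therefore remains to prove $\Lambda(\blambda)\ge\Lambda_0(\blambda)$. Unwinding the definition of $\Lambda$, this is the inequality
\[\int_0^\infty\frac{\dd y}{\Sigma\exp(\blambda+y\m)}\le\frac{1}{\Sigma(\e^{\blambda}\m)}.\]
This is the same Jensen argument that gave \eqref{E:rmu}, applied after an exponential tilt. Set $c=\Sigma\e^{\blambda}$ and let $\Sigma_{\blambda}(s)=\sigma(s)\e^{\lambda(s)}/c$, which is a probability measure. Then
\[\Sigma\exp(\blambda+y\m)=c\,\Sigma_{\blambda}(\e^{y\m})\ge c\,\e^{y\Sigma_{\blambda}\m}\]
by convexity of the exponential. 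Since $\Sigma$ has full support and $\m\not\equiv 0$, we have $\Sigma_{\blambda}\m>0$. Integrating over $y\in(0,\infty)$ bounds the left-hand side by $1/(c\,\Sigma_{\blambda}\m)=1/\Sigma(\e^{\blambda}\m)$, as required.

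I do not expect a real obstacle. The one point needing care is recognizing that the comparison should be made at the level of log-moment generating functions rather than directly on rate functions, and correctly identifying $J$ as the dual of $\Lambda_0$. This includes the infinite values of $J$ on measures charging $\{m=0\}$. Once these are in place, the argument is two applications of Jensen's inequality.
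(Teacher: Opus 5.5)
Your proposal is correct and follows essentially the same route as the paper: Jensen's inequality under the tilted measure $\Sigma_{\e^{\blambda}}$ gives $\Lambda(\blambda)\geq \log \Sigma(\e^{\blambda}\m)$, and the conclusion then comes from convex duality between this log-moment generating function and $J$. The only difference is that you verify explicitly the Gibbs/Fenchel--Young inequality, which is the direction actually needed, including the case where $\boldsymbol{\rho}$ charges $\{m=0\}$. The paper instead simply invokes the well-known identification of the Kullback--Leibler divergence as a Fenchel--Legendre transform.
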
 
\begin{proof} For every $\blambda\in \R^S$, consider the biased probability measure 
$\Sigma_{\e^{\blambda}} \in \mathcal P_S$ given by 
\[\Sigma_{\e^{\blambda}}(s)= \frac{\sigma(s) \e^{\lambda(s)} }{\Sigma \e^{\blambda}}, \qquad s\in S.\]
Then note  from Jensen's inequality,  that for every $y\geq 0$, we have 
\[\Sigma \exp(\blambda+ y\m) =\Sigma \e^{\blambda} \times \Sigma_{\e^{\blambda}} \exp(y\m) 
\geq \Sigma \e^{\blambda} \times  \exp( y  \Sigma_{\e^{\blambda}}  \m),\]
so\[  \int_0^{\infty} \frac{\dd y}{\Sigma \exp(\blambda+ y\m)}\ \leq  \frac{1}{\Sigma \e^{\blambda} \times   \Sigma_{\e^{\blambda}}  \m}=
\frac{1}{\Sigma(\e^{\blambda}  \m)}.
\]
We thus have
\[ \Lambda( \blambda) \geq
 \log (\Sigma(\e^{\blambda}  \m)) = \log (\Sigma_{\m}(\e^{\blambda} )) + \log \Sigma \m.
 \]
 The claim now follows from the well-known fact that the Kullback--Leibler divergence coincides with the Fenchel--Legendre transform of the cumulant generating function.
\end{proof}

\bibliography{solvbm.bib}

\end{document}